 \newtheorem{theorem}{Theorem}[section]
 \newtheorem{lemma}[theorem]{Lemma}
\theoremstyle{definition}
\theoremstyle{remark}
 \newtheorem{remark}[theorem]{Remark}
\newcommand{\ep}{\varepsilon}
\newcommand{\p}{\partial}
\newcommand{\RR}{\mathbb{R}}
\newcommand{\TT}{\mathbb{T}}
\numberwithin{equation}{section}
\begin{document}
\title[]{
Local existence of a fourth order dispersive curve flow
on locally hermitian symmetric spaces
and the application}
\author[E. ONODERA]{Eiji ONODERA}
\address[Eiji Onodera]{Department of Mathematics, 
Faculty of Science, 
Kochi University, 
Kochi 780-8520,Japan}
\email{onodera@kochi-u.ac.jp}
\subjclass[2000]{Primary 53C44; Secondary 35Q35, 35Q40, 35Q55, 35G61, 53C21}
\keywords{fourth order dispersive partial differential equation,  
time-local existence, 
loss of derivatives, 
energy method, 
gauge transformation, 
locally hermitian symmetric space, 
bi-Schr\"odinger flow}
\begin{abstract}
This paper is concerned with a fourth order nonlinear 
dispersive partial differential equation 
for closed curve flow on a K\"ahler manifold.  
The main results is that 
the initial value problem has a solution locally in time  
if the K\"ahler manifold is a compact locally hermitian 
symmetric space. 
The proof is based on the geometric energy method 
combined with a nice gauge transformation to eliminate 
the loss of derivatives. 
Interestingly, 
the results can be applied to construct 
a generalized bi-Schr\"odinger flow 
proposed by Ding and Wang. 
The assumption on the manifold 
plays a crucial role both to enjoy a good solvable structure 
of the problem and to reduce the generalized bi-Schr\"odinger flow 
equation to the one considered in the present paper. 
\end{abstract}

\maketitle
\section{Introduction}
\label{section:introduction}
Let $N$ be a compact K\"ahler manifold  
with the complex structure $J$ 
and with a K\"ahler metric $h$, 
and let $\TT=\RR/2\pi \mathbb{Z}$ 
being the one-dimensional flat torus.  
We consider 
the initial value problem of the form 
\begin{alignat}{2}
 & u_t
  =
  a\,J_u\nabla_x^3u_x
  +
  \lambda\, J_u\nabla_xu_x
  +
  b\, R(\nabla_xu_x,u_x)J_uu_x
  +
  c\, R(J_uu_x,u_x)\nabla_xu_x
 \quad &\text{in}\quad
  &\mathbb{R}{\times} \mathbb{T},
\label{eq:pde}
\\
& u(0,x)
  =
  u_0(x)
\quad&\text{in}\quad &\mathbb{T},  
\label{eq:data}
\end{alignat}
where the solution is a map 
$u=u(t,x):\RR\times \TT\to N$  
being a closed curve flow on $N$
and the initial function is given by  
$u_0=u_0(x):\TT\to N$. 
For the map $u$, 
the velocity vector field in $t$ and $x$ 
is respectively denoted by 
$u_t=du(\frac{\p}{\p t})$ 
and 
$u_x=du(\frac{\p}{\p x})$ 
where  
$du$ is the differential of $u$, 
the covariant derivative along $u$ in $x$ is 
by $\nabla_x$, 
the complex structure at $u\in N$ is by $J_u$, 
and the Riemannian curvature tensor  
on $(N,h)$ is by $R=R(\cdot,\cdot)$. 
Moreover, 
$a$, $b$, $c$, $\lambda$ are real constants 
and $a\ne 0$ is supposed so that \eqref{eq:pde} is handled as 
a fourth order nonlinear dispersive partial differential equation. 
\par 
The equation \eqref{eq:pde} is derived by generalizing   
a two-sphere-valued physical model.  
Lakshmanan, Porsezian, and Daniel \cite{LPD} 
studied the continuum limit of the Heisenberg 
spin chain systems 
with biquadratic exchange interactions 
and formulated the model equation, 
which reads
\begin{align}
 & u_t
  =
  u\wedge
  \left[
  a_1\,\p_x^3u_{x}
  +
  \{1+a_2\, (u_x,u_x)\} \p_xu_{x}
  +
  2a_2\, (\p_xu_{x},u_x) u_{x}
  \right], 
\label{eq:pdes2}
\end{align}
where $u:\RR\times \TT\to \mathbb{S}^2\subset \RR^3$ is the solution, 
$\mathbb{S}^2$ denotes the two-dimensional unit sphere centered at the origin, 
$\p_x$ is the partial differential operator in $x$ acing on 
$\RR^3$-valued functions, 
$(\cdot,\cdot)$ and $\wedge$ denotes the inner and  
the exterior product in $\RR^3$ respectively, 
and $a_1\ne 0, a_2\in \RR$ are real physical constants. 
Moreover, \eqref{eq:pdes2} is known to arise in connection with 
the equation derived by   
Fukumoto and Moffatt \cite{fukumoto, FM} 
to model the vortex filament in an incompressible perfect fluid in $\RR^3$. 
As is found in Section~\ref{subsection:sub2},  
\eqref{eq:pdes2} is reformulated as \eqref{eq:pde}
where $N=\mathbb{S}^2$ with the complex structure 
$J_u=u\wedge$ being the $\pi/2$-degree rotation 
on $T_{u}\mathbb{S}^2$ and with  
$h(\cdot,\cdot)=(\cdot,\cdot)$ being the canonical metric 
induced from the Euclidean metric in $\RR^3$ and 
$a=a_1$, $b=5a_1-2a_2$, $c=-6a_1+3a_2$, and $\lambda=1$.   
\par 
It is to be commented that another geometric generalization 
of \eqref{eq:pdes2} 
has been proposed in \cite{onodera0}. 
The equation can be formulated by   
\begin{equation}
u_t=b_1\,J_u\nabla_x^3u_x
+b_2\,J_u\nabla_xu_x
+b_3\,h(u_x,u_x)J_u\nabla_xu_x
+b_4\,h(\nabla_xu_x,u_x)J_uu_x
\label{eq:pde9th}
\end{equation}
for $u=u(t,x):\RR\times \TT\to N$, 
where  
$(N,J,h)$ is a compact K\"ahler manifold, 
and $b_1\ne 0$, $b_2,b_3,b_4\in \RR$ are real constants. 
As in \cite{onodera0}, the equation  
\eqref{eq:pdes2} is actually generalized also as 
\eqref{eq:pde9th} where 
$N=\mathbb{S}^2$, 
$b_1=a_1$, 
$b_2=1$, 
$b_3=a_2-a_1$, 
and
$b_4=-5a_1+2a_2$. 
In fact, \eqref{eq:pde} and \eqref{eq:pde9th} 
are essentially the same, 
if $(N,J,h)$ is a Riemann surface with constant Gaussian curvature. 
See Section~\ref{subsection:sub2} for the detail. 
\par 
We are interested in the solvability of the initial value problem 
for \eqref{eq:pde}, \eqref{eq:pdes2}, and \eqref{eq:pde9th} 
in the framework of a Sobolev space. 
The main difficulty comes from the so-called loss of derivatives, 
which prevents the classical energy method from working to construct 
a time-local solution. 
More concretely, by the seemingly bad structure of first and second 
order terms in the equation, 
the estimate for the standard Sobolev norm of the solution 
can not be closed 
only by the integration by parts. 
In general, to clarify the solvability of the initial value 
problem for a dispersive 
partial differential equation, 
the structure of the lower order terms in the equation 
is to be analized in detail. 
The structure obviously becomes more complicated 
as the spatial order of the  equation becomes higher
(See, e.g., \cite{chihara2,mizuhara} for details.). 
In other words, 
the solvable structure of 
 \eqref{eq:pde}, \eqref{eq:pdes2}, and \eqref{eq:pde9th}
seems to be connected with the geometric setting of $(N,J,h)$ 
essentially.
\par 
We state here the known results in the direction. 
Guo, Zeng, and Su \cite{GZS} 
investigated the $\mathbb{S}^2$-valued model 
\eqref{eq:pdes2} and 
showed time-local existence of a weak solution to the 
initial value problem imposing that $2a_2=5a_1$. 
In the proof, two types of conservation laws 
(which do not hold if $2a_2\ne 5a_1$)
works effectively 
to overcome the difficulty of loss of derivatives. 
In recent years, for both 
\eqref{eq:pdes2} and \eqref{eq:pde9th}, 
the loss of derivatives 
have been found to be eliminated completely  
by the energy method combined with a kind of 
nice gauge transformation acting on the highest order derivatives 
of the solution. 
Indeed, the author \cite{onodera3}
studied the initial value 
problem for \eqref{eq:pdes2} 
and showed time-local existence of a smooth solution 
and the uniqueness without any assumption on $a_1\ne 0, a_2\in \RR$. 
Moreover, he showed the time-global existence  
imposing that $2a_2=5a_1$. 
After that,  
the author \cite{onodera4} 
studied the initial value problem for \eqref{eq:pde9th} and 
showed time-local existence of a smooth  
solution and the uniqueness under the assumption that 
$(N,J,h)$ is a compact Riemann surface 
with constant Gaussian curvature 
without any assumption on $b_1\ne 0,\ldots,b_4\in \RR$. 
To our interest, the necessity in some sense 
of the assumption on the curvature of 
$(N,h)$ is pointed out by Chihara \cite{chihara2} 
from the point of view of the theory of linear dispersive 
partial differential equations. 
Incidentally, 
Chihara and the author  \cite{CO2}
studied the initial value problem for \eqref{eq:pde9th} 
replacing $\TT$ with the real line $\RR$ as the spatial domain, 
and  showed time-local existence of a smooth  
solution and the uniqueness 
without any assumption on the compact K\"ahler manifold  
 $(N,J,h)$ and constants $b_1\ne 0, \ldots, b_4\in \RR$. 
The proof essentially applies the local smoothing effect 
for the fundamental solution to dispersive partial differential 
equations on $\RR$ to dominate the loss of derivatives. 
However, the method of the proof is not valid in the case of $\TT$, 
since the above dispersive smoothing effect 
is absent on compact spatial domains.  
\par 
The purpose of the present paper was 
to clarify the solvability of the initial value problem 
\eqref{eq:pde}-\eqref{eq:data} without the dispersive smoothing effect, 
which is a continuation of the work 
\cite{onodera3, onodera4}
stated above. 
The difficulty of loss of derivatives occurs also for
\eqref{eq:pde}. 
However, in view of the similarity between \eqref{eq:pde} 
and \eqref{eq:pde9th}, 
it seems natural to expect positive results 
under a suitable geometric assumption on $(N,J,h)$. 
Indeed, we have found a sufficient condition on $(N,J,h)$
to ensure the existence of a time-local smooth solution.  
More precisely, the main results  
is stated as follows: 
\begin{theorem}
\label{theorem:main}
Suppose that $(N,J,h)$  is a compact 
K\"ahler manifold satisfying $\nabla R=0$. 
Let $k$ be an integer satisfying $k\geqslant 4$. 
Then for any 
$u_0\in C(\mathbb{T};N)$ satisfying 
$u_{0x}\in H^k(\TT;TN)$, 
there exists $T=T(\|u_{0x}\|_{H^4(\TT;TN)})>0$
such that
\eqref{eq:pde}-\eqref{eq:data} 
has a solution 
$u\in C([-T,T]\times \TT;N)$
satisfying 
$u_x\in 
L^{\infty}(-T,T;H^{k}(\TT;TN))
\cap
C([-T,T];H^{k-1}(\TT;TN)).
$
\end{theorem}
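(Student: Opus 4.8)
The plan is to prove time-local existence via the geometric energy method, using a parabolic regularization together with a carefully constructed gauge transformation that removes the loss of derivatives. Since the paper is about eliminating loss of derivatives on a compact spatial domain, the key is that the assumption $\nabla R = 0$ (locally hermitian symmetric) lets the problematic first- and second-order terms in the equation for $\nabla_x^k u_x$ be absorbed after conjugation by a suitable first-order pseudodifferential (or, more concretely, multiplication-type) operator built from $u_x$ and $J_u$.

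First I would differentiate the equation. Setting $v = \nabla_x^j u_x$ and commuting $\nabla_x^j$ through \eqref{eq:pde}, one obtains a PDE of the schematic form $\nabla_t v = a\, J_u \nabla_x^4 v + (\text{lower order})$, where the coefficient of $\nabla_x^3 v$ vanishes (it is forced to be a multiple of $J_u\nabla_x$ of something, and such terms are skew after pairing), but the coefficients of $\nabla_x^2 v$ and $\nabla_x v$ involve $u_x$, $\nabla_x u_x$, and the curvature $R$; crucially, because $\nabla R = 0$, no derivatives of $R$ of order matching the top appear, so these coefficients are controlled by low Sobolev norms of $u_x$. The loss of derivatives resides in the $\nabla_x^2 v$ term (its coefficient involves $u_x$, which at the top level $j=k$ is as regular as $v$ itself) — this is the step I expect to be the main obstacle, and it is exactly what the gauge transformation is designed to kill.

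Next I would introduce the gauge: define $w = e^{i\Lambda} v$ (or a covariant analogue) where $\Lambda$ is a real scalar function of $x$ chosen so that the conjugated operator $a\,J_u\nabla_x^4 + 2a\, i (\p_x\Lambda) J_u \nabla_x^3 + \cdots$ has no net first-order-in-the-bad-direction term after integration by parts; this forces an ODE for $\p_x\Lambda$ in terms of the coefficient of the $\nabla_x^2 v$ term, solvable because $\TT$ is compact provided a mean-zero compatibility condition holds (which, again, the symmetric-space structure and the $2\pi$-periodicity arrange, or one adds a harmless constant zeroth-order term). Then one runs the energy estimate on $\sum_{j\le k}\|w_j\|_{L^2}^2$: the fourth-order term $\langle J_u \nabla_x^4 w, w\rangle$ is skew-symmetric hence contributes nothing, the second-order term is now benign by construction of $\Lambda$, and all remaining terms are bounded by $C(\|u_x\|_{H^3})\|u_x\|_{H^k}^2$ via integration by parts, Sobolev embedding $H^1(\TT)\hookrightarrow L^\infty(\TT)$, and the Gagliardo–Nirenberg inequalities. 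A Gronwall argument then yields an a priori bound on $[-T,T]$ with $T$ depending only on $\|u_{0x}\|_{H^4}$.

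Finally I would set up the actual existence scheme: approximate \eqref{eq:pde} by adding $-\epsilon \nabla_x^6 u_x$ (or $\mp\epsilon\nabla_x^4 u_x$ with the sign making it parabolic) to obtain, for each $\epsilon>0$, a quasilinear parabolic system whose short-time solvability in $H^k$ is standard; derive the $\epsilon$-uniform energy estimates exactly as above (the regularizing term only helps, contributing a good sign after integration by parts); pass to the limit $\epsilon\downarrow 0$ using the uniform bound in $L^\infty(-T,T;H^k)$ together with the equation to get equicontinuity in time, so Aubin–Lions gives strong convergence in $C([-T,T];H^{k-1})$ — enough to pass to the limit in all nonlinear terms and recover a solution with the stated regularity, $u_x\in L^\infty(-T,T;H^k)\cap C([-T,T];H^{k-1})$ and $u\in C([-T,T]\times\TT;N)$ by integrating $u_x$ and using the target manifold structure. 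Throughout, the single genuinely delicate point is verifying that the gauge ODE for $\Lambda$ is globally solvable on $\TT$ and that the conjugation does not reintroduce loss at lower order — this is where $\nabla R=0$ is used decisively.
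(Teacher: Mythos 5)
Your outer framework---parabolic regularization, an a priori estimate for a gauged energy, Gronwall, and a compactness passage to the limit---coincides with the paper's and is sound. The genuine gap is in the gauge transformation itself, which is the heart of the proof. You propose conjugating by a scalar real phase $e^{i\Lambda(x)}$ and solving an ODE for $\partial_x\Lambda$ subject to a mean-zero condition on $\TT$. This cannot work here. The terms responsible for the loss of derivatives in the equation for $\nabla_x^ku_x$ are $d_1P_1\nabla_x^2$ and $d_3P_2\nabla_x$ with the operator-valued coefficients $P_1Y=R(Y,J_uu_x)u_x$ and $P_2Y=R(J_u\nabla_xu_x,u_x)Y$ (see \eqref{eq:esspde}); on a general locally hermitian symmetric target (e.g.\ a Grassmannian) these are genuinely matrix-valued on $\Gamma(u^{-1}TN)$, do not commute with $J_u$, and cannot be cancelled by any scalar multiplier --- only in the constant-curvature Riemann surface case, already treated in earlier work, do they reduce to scalars. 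Moreover the order bookkeeping fails: a complex phase does not even act on sections of $u^{-1}TN$, so you must interpret $i$ as $J_u$, and conjugating the principal part $a\,J_u\nabla_x^4$ by the zeroth-order multiplier $e^{\Lambda J_u}$ first of all creates a third-order term of the type $-4a(\partial_x\Lambda)\nabla_x^3$ with non-constant real coefficient; pairing such a term with $V$ and integrating by parts leaves $\tfrac{3}{2}\int_{\TT}(\partial_x\Lambda)\,|\nabla_xV|_h^2\,dx$, which is itself a loss of one derivative, so the scalar gauge introduces a new uncontrollable term instead of removing the old ones. Finally, the mean-zero solvability you defer is not automatic and cannot be repaired by ``adding a harmless constant zeroth-order term''; on the torus it would be a genuine obstruction if the scalar route were the right one.

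What is actually needed (and what the paper does) is a curvature-valued correction of order $-2$, not a scalar phase: $V_k=\nabla_x^ku_x-\frac{e_1}{2a}R(\nabla_x^{k-2}u_x,u_x)u_x+\frac{e_2}{8a}R(J_uu_x,u_x)J_u\nabla_x^{k-2}u_x$ as in \eqref{eq:igauge}, formally $\bigl(I_d+\Phi\nabla_x^{-2}\bigr)\nabla_x^ku_x$. The commutator $[a\,J_u\nabla_x^4,\Phi\nabla_x^{-2}]$ produces a second-order contribution precisely because $\Phi_1Y=R(Y,u_x)u_x$ fails to commute with $J_u$ (the anticommutator $J_u\Phi_1+\Phi_1J_u$ regenerates $P_1\nabla_x^2$ together with $P_2\nabla_x$), while $\Phi_2=R(J_uu_x,u_x)J_u$ commutes with $J_u$ and contributes only $P_2\nabla_x$; choosing the constants $e_1=-d_1$ and $e_2=-d_3-e_1$ then cancels both bad terms purely algebraically, with no ODE and no compatibility condition on $\TT$. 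The hypotheses enter exactly here: $\nabla R=0$ guarantees that no derivatives fall on $R$ when the commutators and the differentiated equation are expanded (so all remaining terms are of harmless order), and the K\"ahler condition gives $\nabla J=0$. Your remaining steps (the $\varepsilon$-parabolic approximation, the $\varepsilon$-uniform bound on a time interval depending only on $\|u_{0x}\|_{H^4}$, and the compactness limit yielding $u_x\in L^\infty(-T,T;H^k)\cap C([-T,T];H^{k-1})$) match the paper, but without the correct gauge the central energy estimate does not close.
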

\underline{Notation.} \ 
Let $\phi:\TT\to N$.   
The set of all vector fields along $\phi$
is denoted by $\Gamma(\phi^{-1}TN)$.   
The class $H^m(\TT;TN)$ for $m=0,1,2\ldots$ 
consists of all elements 
$V\in \Gamma(\phi^{-1}TN)$ 
satisfying  
$$
\|V\|_{H^m(\TT;TN)}
:=
\sum_{\ell=0}^m
\int_{\TT}
h(\nabla_x^{\ell}V(x), \nabla_x^{\ell}V(x))\,dx
<\infty.
$$
In particular, $H^0(\TT;TN)=L^2(\TT;TN)$ for simplicity. 
\vspace{0.3em}
\par
We state a contribution of Theorem~\ref{theorem:main}.  
Recall that   
\eqref{eq:pde} is nothing but \eqref{eq:pde9th} 
as far as $(N,J,h)$ is a compact Riemann surface with 
constant Gaussian curvature 
and the case has been already 
investigated in \cite{onodera3} and \cite{onodera4}. 
Hence what seems to be interesting or meaningful 
is the other cases. 
Fortunately, Theorem~\ref{theorem:main} actually 
includes such cases. 
Indeed, in Riemannian geometry, 
a K\"ahler manifold $(N,J,h)$ that satisfies $\nabla R=0$ is called 
a locally hermitian symmetric space,  
and the class of compact locally hermitian symmetric spaces is known to 
be purely wider than that of compact Riemann surfaces with constant 
Gaussian curvature. 
For example, Theorem~\ref{theorem:main} includes the case where 
$(N,J,h)$ is a compact K\"ahler manifold with constant 
holomorphic sectional curvature. 
Such a manifold is not necessarily
a Riemann surface 
with constant sectional curvature.
Incidentally, the uniqueness of the solution has been proved  
in \cite{onodera4} for \eqref{eq:pde9th} imposing the regularity 
$k\geqslant 6$. 
The proof uses an isometric embedding of the manifold into the higher
dimensional Euclidean space 
to evaluate the difference of two solutions.  
Although the author expects that the uniqueness 
holds also for \eqref{eq:pde}-\eqref{eq:data}, 
the procedure requires a lengthy computation 
and hence we do not pursue in the direction in the present paper. 
\par
To prove Theorem~\ref{theorem:main}, 
we apply the geometric energy method combined with a gauge
transformation.  
The geometric classical energy method without gauge transformations 
is known to work in the study of 
the Schr\"odinger map flow equation, 
a second order dispersive equation, 
for maps into K\"ahler manifolds. 
Indeed, under the setting, many local and global existence results 
have been established.
See, e.g., the pioneering work 
by Koiso \cite{koiso}, 
Chang, Shatah, and Uhlenbeck \cite{CSU}, 
Ding and Wang \cite{DW0}, 
Nahmod et al. \cite{NSVZ}, 
McGahagan \cite{McGahagan}, 
Rodnianski, Rubinstein, and Staffilani \cite{RRS}, 
Kenig et al. \cite{KLPST},
and references therein. 
On the other hand, the geometric energy method with a gauge transformation 
was first introduced by Chihara 
\cite{chihara}. 
He showed time-local existence and the uniqueness 
of a solution to the initial value problem for 
the Schr\"odinger map flow equation without the K\"ahler condition on the 
manifold. 
The method has been applied  
to study a fourth order dispersive flow equation 
\eqref{eq:pde9th} in \cite{CO2,onodera4} as stated above 
as well as a third order 
dispersive flow equation in \cite{CO,onodera2}. 
We slightly modify the one used in \cite{onodera4} 
to prove Theorem~\ref{theorem:main}.    
\par
More precisely, 
the idea of deciding the form 
of the gauge transformation used in the present paper is 
as follows.  
Suppose $u$ is a smooth solution 
to \eqref{eq:pde}-\eqref{eq:data}. 
If $k\geqslant 4$, the partial differential equation satisfied by 
$\nabla_x^ku_x$ 
is found to be described by  
\begin{align}
(\nabla_t-a\,J_u\nabla_x^4-d_1\,P_1\nabla_x^2-d_3\,P_2\nabla_x)
\nabla_x^ku_x
&=
\mathcal{O}
\left(
\sum_{m=0}^{k+2}
|\nabla_x^mu_x|_h
\right), 
\label{eq:esspde}
\end{align}
where $|\cdot|_h=\left\{h(\cdot,\cdot)\right\}^{1/2}$, 
$d_1$ and $d_3$ are real constants depending on 
$a,b,c,k$, 
and $P_1$ and $P_2$ are defined by 
\begin{align}
P_1Y
&=
R(Y,J_uu_x)u_x 
\quad
\text{and}
\quad
P_2Y
=
R(J_u\nabla_xu_x,u_x)Y
\nonumber
\end{align}
respectively 
for any $Y\in \Gamma(u^{-1}TN)$. 
See \eqref{eq:ayaya00} for the detail  
where $d_1$ and $d_3$ are also given exactly. 
From \eqref{eq:esspde},  
the classical energy estimate 
for $\|\nabla_x^ku_x\|_{L^2(\TT;TN)}^2$ 
is found to break down because 
the two operators $d_1\,P_1\nabla_x^2$ and $d_3\,P_2\nabla_x$
cause loss of derivatives. 
Though the right hand side of \eqref{eq:esspde} includes 
$\nabla_x^2(\nabla_x^ku_x)$ and $\nabla_x(\nabla_x^ku_x)$, 
no loss of derivatives occur thanks to 
the assumption $\nabla R=0$ and 
the K\"ahler condition $\nabla J=0$ on $(N,J,h)$.  
Hence it suffices to  eliminate the loss of derivatives 
which come from 
$d_1\,P_1\nabla_x^2$ and $d_3\,P_2\nabla_x$. 
For this purpose, 
inspired by  \cite{chihara2, onodera4}, 
we introduce the gauge transformed function $V_k$ defined by 
\begin{align}
V_k
&=
\nabla_x^ku_x
-\frac{e_1}{2a}\,
R(\nabla_x^{k-2}u_x,u_x)u_x
+
\frac{e_2}{8a}\,
R(J_uu_x,u_x)J_u\nabla_x^{k-2}u_x, 
\label{eq:igauge}
\end{align}
where 
$e_1$ and $e_2$ are constants to be decided later. 
Notice that $V_k$ is formally expressed by  
$$
V_k=\left(I_d-\frac{e_1}{2a}\Phi_1\nabla_x^{-2}
+\frac{e_2}{8a}\Phi_2\nabla_x^{-2}\right)\nabla_x^ku_x, 
$$ 
where $I_d$ is the identity on $\Gamma(u^{-1}TN)$ and 
$\Phi_1$ and $\Phi_2$ is defined by  
\begin{align}
\Phi_1Y
&=
R(Y,u_x)u_x 
\quad 
\text{and}
\quad
\Phi_2
=
R(J_uu_x,u_x)J_uY 
\nonumber
\end{align}
respectively 
for any $Y\in \Gamma(u^{-1}TN)$. 
Noting that 
$J_u$ commutes with $\Phi_2$  and not with $\Phi_1$, 
we obtain 
\begin{align}
\left[
a\,J_u\nabla_x^4, -\frac{e_1}{2a}\Phi_1\nabla_x^{-2}
\right]\nabla_x^ku_x
&=
(-e_1\,P_1\nabla_x^2-e_1\,P_2\nabla_x)\nabla_x^ku_x
+\text{harmless terms},
\label{eq:obs1}
\\
\left[
a\,J_u\nabla_x^4, \frac{e_2}{8a}\Phi_2\nabla_x^{-2}
\right]\nabla_x^ku_x
&=
-e_2\,P_2\nabla_x\nabla_x^ku_x
+
\text{harmless terms}, 
\label{eq:obs2}
\end{align}
where $[\cdot,\cdot]$ denotes the commutator bracket of two operators. 
Therefore, if we set $e_1=-d_1$ and $e_2=d_1-d_3$, 
the above two commutators eliminate 
$d_1\,P_1\nabla_x^2+d_3\,P_2\nabla_x$ in the partial 
differential equation satisfied by $V_k$, 
and hence the energy estimate for  
$\|u_x\|_{H^{k-1}(\TT;TN)}^2+\|V_k\|_{L^2(\TT;TN)}^2$ works. 
More precisely, the standard compactness argument 
with the above energy estimate for the family of 
parabolic regularized solutions  
shows the existence of a solution locally in time, 
which completes the proof.   
\par 
Let us now turn our attention to 
one more interesting conclusion  
in connection with the so-called generalized 
bi-Schr\"odinger flow. 
Recently, 
Ding and Wang \cite{DW} proposed 
a fourth order dispersive partial differential equation 
for maps into K\"ahler manifolds or para-K\"ahler manifolds, 
whose solution is called 
a generalized bi-Schr\"odinger flow. 
We recall briefly the definition and 
their work in \cite{DW} restricting to the part 
which is related to the present paper. 
Let $(M, g)$ be an $m$-dimensional Riemannian manifold 
with a Riemannian metric $g$
and let $(N,J,h)$ be a $2n$-dimensional K\"ahler manifold 
with the complex structure $J$ and a K\"ahler metric $h$.
Fix $\left\{e_1, \ldots, e_m\right\}$ 
as a local frame of $(M, g)$. 
In the local frame, $g$ is expressed by 
$g=(g_{ij})$ and its inverse is by $(g^{ij})$.  
Let $\alpha, \beta, \gamma\in \RR$ be constants where $\beta\ne 0$. 
Then the energy functional $E_{\alpha,\beta,\gamma}(u)$ 
for smooth maps
$u:(M, g)\to (N, J, h)$ is defined by  
$$
E_{\alpha,\beta,\gamma}(u)
:=
\alpha\,E(u)
+\beta\,E_2(u)
+
\gamma\,E_{\star}(u).  
$$
Here,
$E(u)
=
\frac{1}{2}
\int_M
|du|^2\,dv_g$
is the energy functional whose critical points 
are called harmonic maps 
and 
$E_2(u)
=
\frac{1}{2}
\int_M
|\tau(u)|^2\,dv_g$
is the bi-energy functional 
whose critical points are called
bi-harmonic maps, 
where  
$\tau(u)$ is the tension field along $u$
and $dv_g$ is the 
volume form of $(M,g)$. 
One can consult with \cite{ES, Jiang} for their definition. 
The energy functional $E_{\star}(u)$ is defined by 
\begin{align}
E_{\star}(u)
&=
\int_M
\sum_{i,j,k,\ell=1}^m
g^{ij}g^{kl}\,
h(R(\nabla_{e_i}u, J_u\nabla_{e_j}u)J_u\nabla_{e_k}u,
 \nabla_{e_{\ell}}u)\,dv_g, 
\nonumber
\end{align} 
where
$\nabla_{e_k}$ is the covariant derivative on the pull-back bundle 
$u^{-1}TN$ induced from the Levi-Civita connection on $(N,h)$, 
$R(\cdot,\cdot)$ is the Riemannian curvature tensor on $(N,h)$. 
The definition is of significance,
in that 
the function in the above integral 
is independent of the choice of a local frame 
$\{e_1,e_2,\ldots, e_m\}$. 
A time-dependent map 
$u=u(t,x):(-T,T)\times M\to N$  
is called a generalized bi-Schr\"odinger flow 
from $(M,g)$ to $(N,J,h)$ 
if $u$ satisfies the following Hamiltonian gradient flow equation 
\begin{equation}
u_t=J_u\nabla E_{\alpha,\beta,\gamma}(u)
\label{eq:bibi} 
\end{equation}
in $(-T,T)\times M$ for some $T>0$.  
For \eqref{eq:bibi}, 
the authors in \cite{DW} investigated 
maps from $\RR$ to 
the symmetric space $N$ where 
$N$ is one of $G_{n,k}$ or $G_n^k$. 
Here $G_{n,k}$ is the K\"ahler Grassmannian manifold of compact type 
and $G_{n}^k$  is the K\"ahler Grassmannian manifold of 
noncompact type. 
They showed that \eqref{eq:bibi} corresponds to 
the following more 
specific form 
\begin{equation}
\varphi_t=\left[
\varphi, -\alpha\,\varphi_{xx}
+\beta\,\varphi_{xxxx}
+(4\gamma-2\beta)
(\varphi_x\varphi^{-1}\varphi_x\varphi^{-1}\varphi_x)_x
\right]
\label{eq:lie}
\end{equation} 
for $\varphi:\RR\times \RR\to N$.  
Furthermore, they reduced \eqref{eq:lie} to a fourth order 
nonlinear Schr\"odinger-like matrix equation. 
See \cite{DW} for more details. 
What seems to be interesting in connection with the present paper 
is that the equation \eqref{eq:bibi} 
for maps from $\RR$ or $\TT$ 
to locally hermitian symmetric spaces
is found to be formulated as the equation \eqref{eq:pde}.   
In fact, in Section~\ref{section:biSF}, 
we formulate \eqref{eq:bibi} as \eqref{eq:high_bi_SF} 
imposing that $(M,g)$ is an $m$-dimensional flat torus 
$(\TT^m,g_0)$ and $(N,J,h)$ is a 
locally hermitian symmetric space. 
Then the  derived equation \eqref{eq:high_bi_SF} is easily found to be 
just \eqref{eq:pde} if $m=1$ (see \eqref{eq:1_bi_SF}).
To obtain the formulation, we apply the method in \cite{KLPST}. 
Though we demonstrate only the case where 
$(M,g)=(\TT^m,g_0)$, 
we can obviously obtain the  
the same equation as \eqref{eq:high_bi_SF} 
also in the case $(M,g)$ is an $m$-dimensional Euclidean space 
$(\RR^m,g_0)$. 
To summarize, 
the present paper derives another unified formulation of \eqref{eq:bibi} 
as \eqref{eq:high_bi_SF}
for maps from $(\RR^m,g_0)$ or $(\TT^m,g_0)$ 
into locally hermitian symmetric spaces
including 
$G_{n,k}$ and $G_n^k$. 
In addition, Theorem~\ref{theorem:main} 
concludes time-local existence of a generalized bi-Schr\"odinger flow 
for maps from $(\TT,g_0)$ into the compact locally hermitian symmetric 
spaces including $G_{n,k}$ and other examples which 
have not been studied so far. 
Finally, we stress that 
the assumption $\nabla R=0$ works 
both for enjoying the solvable structure of 
\eqref{eq:pde}-\eqref{eq:data} 
and for reducing \eqref{eq:bibi}  
to the form \eqref{eq:high_bi_SF}. 
\par
The organization of the present paper is as follows:
In Section~\ref{section:biSF}, 
the equation for the generalized bi-Schr\"odinger flow 
is formulated as \eqref{eq:high_bi_SF} or 
\eqref{eq:1_bi_SF}, 
and the relationship among \eqref{eq:pde}, 
\eqref{eq:pdes2}, and \eqref{eq:pde9th} 
is discussed. 
In Section~\ref{section:existence}, 
the proof of Theorem~\ref{theorem:main} is demonstrated. 
\section{The 
Generalized Bi-Schr\"odinger Flow and the Relation with Other Equations
}
\label{section:biSF}
First, we formulate  
the equation \eqref{eq:bibi} for maps 
from $(\TT^m,g_0)$ to locally hermitian symmetric spaces
as \eqref{eq:high_bi_SF} or 
\eqref{eq:1_bi_SF},
and discuss the correspondence to \eqref{eq:pde}.  
Next, we discuss the relationship among 
\eqref{eq:pde}, \eqref{eq:pdes2}, and \eqref{eq:pde9th}.
\subsection{The formulation of the generalized bi-Schr\"odinger flow.}
\label{subsection:sub1} 
In this subsection, 
suppose that $(M,g)=(\TT^m,g_0)$ and  
$(N,J,h)$ is a locally hermitian symmetric space. 
We formulate \eqref{eq:bibi}. 
\par 
Before that,
let us first recall the following basic properties 
of the Riemannian curvature tensor:  
For a map $u:\TT^m\to N$ 
and for any $Y_1, \ldots, Y_4\in \Gamma(u^{-1}TN)$, 
it holds that  
\begin{enumerate}
\item[(i)] $R(Y_1,Y_2)=-R(Y_2,Y_1)$,
\item[(ii)] $h(R(Y_1,Y_2)Y_3,Y_4)=h(R(Y_3,Y_4)Y_1,Y_2)=h(R(Y_4,Y_3)Y_2,Y_1)$,
\item[(iii)] $R(Y_1,Y_2)Y_3+R(Y_2,Y_3)Y_1+R(Y_3,Y_1)Y_2=0$,
\item[(iv)] $R(Y_1,Y_2)J_uY_3=J_u\,R(Y_1,Y_2)Y_3$, 
\item[(v)] 
$R(J_uY_1,J_uY_2)Y_3=R(Y_1,Y_2)Y_3$,
\item[(vi)]
$R(J_uY_1,Y_2)Y_3=-R(Y_1,J_uY_2)Y_3=R(J_uY_2,Y_1)Y_3$.
\end{enumerate} 
The property (iii) is called the Bianchi identity.  
The property (iv) holds since $(N,J,h)$ is a K\"ahler manifold.
The property (v) follows from (ii), (iv), and the invariance 
of $h$ under the action $J_u$.   
The property (vi) follows from (i), (v), and $J_u^2=-I_d$.  
\par 
In addition, 
the condition $\nabla R=0$ imposed on $(N,h)$ implies 
\begin{align}
&\nabla_x
\left\{
R(Y_1,Y_2)Y_3
\right\}
\nonumber
\\
&=
(\nabla_x R)(Y_1,Y_2)Y_3
+
R(\nabla_xY_1,Y_2)Y_3
+
R(Y_1,\nabla_xY_2)Y_3
+
R(Y_1,Y_2)\nabla_xY_3
\nonumber
\\
&=
R(\nabla_xY_1,Y_2)Y_3
+
R(Y_1,\nabla_xY_2)Y_3
+
R(Y_1,Y_2)\nabla_xY_3. 
\label{eq:sym}
\end{align}
These properties (i)-(vi) and \eqref{eq:sym} 
are applied not only in this section but also 
in the proof of Theorem~\ref{theorem:main}. 
\par
Let us next formulate 
three energy functionals 
stated in Introduction: 
For a map $u:\TT^m\to N$, 
$E(u)$ and $E_2(u)$  
is respectively formulated as follows: 
\begin{align}
E(u)
&=
\frac{1}{2}
\sum_{k=1}^m
\int_{\TT^m}
\left|u_{x_k}\right|_h^2\,dx
\quad
\text{and}
\quad
E_2(u)
=
\frac{1}{2}
\int_{\TT^m}
\left|
\sum_{k=1}^m
\nabla_{x_k}u_{x_k}
\right|_h^2\,dx,
\nonumber
\end{align}
where $|\cdot|_h=\left\{h(\cdot,\cdot)\right\}^{1/2}$. 
Moreover, 
since the right hand side of $E_{\star}(u)$   
is independent of the choice of 
$\left\{e_1, \ldots, e_m\right\}$, 
we set $e_k=\p/\p x_k$ ($k=1,2,\ldots,m$) to deduce 
\begin{align}
E_{\star}(u)
&=
\int_{\TT^m}
\sum_{i,j,k,\ell=1}^m
\delta_{ij}\delta_{k\ell}
h(R(\nabla_{e_i}u, J_u\nabla_{e_j}u)J_u\nabla_{e_k}u,
 \nabla_{e_{\ell}}u)
\,dx
\nonumber
\\
&=
\int_{\TT^m}
\sum_{i,k=1}^m
h(R(\nabla_{e_i}u, J_u\nabla_{e_i}u)J_u\nabla_{e_k}u,
 \nabla_{e_{k}}u)
\,dx
\nonumber
\\
&=
\int_{\TT^m}
\sum_{i,k=1}^m
h(R(u_{x_i}, J_uu_{x_i})
J_uu_{x_k},
u_{x_k})
\,dx.
\nonumber
\end{align} 
 \par 
Having them in mind,  
we are now ready to compute the gradient flow 
$$
\nabla E_{\alpha,\beta,\gamma}(u)
=
\alpha\,\nabla E(u)
+
\beta\,\nabla E_{2}(u)
+
\gamma\,\nabla E_{\star}(u).
$$  
We demonstrate the computation of 
$\nabla E_{\star}(u)$ below. 
First,  
following the method in \cite{KLPST},  
we construct the variation of $u:\TT^m\to N$ 
with given initial velocity 
$\xi\in \Gamma(u^{-1}TN)$ 
by    
$U:\TT^m\times \RR\to N$, 
where 
$U(x,\ep)=\exp_{u(x)}\left[ \ep\,\xi(x)\right]$ 
and 
$\exp_{u(x)}:T_{u(x)}N\to N$ 
is the exponential map at $u(x)\in N$. 
Next, since  
$\nabla E_{\star}(u)$ 
is given by 
\begin{equation}
\label{eq:grad}
\frac{d}{d \ep}
E_{\star}(U)\biggl|_{\ep=0}
=
\int_{\TT^m}
h(\nabla E_{\star}(u), \xi)\,dx, 
\end{equation}
we compute the left hand side of \eqref{eq:grad}, 
where $\nabla R=0$ is used. 
Indeed, by using (ii) and \eqref{eq:sym} with $u=U$, 
we see 
\begin{align}
\frac{d}{d \ep}
E_{\star}(U)
&=
\sum_{i,k=1}^m
\int_{\TT^m}
h\left(
R(\nabla_{\ep}U_{x_i}, J_UU_{x_i})J_UU_{x_k}, 
U_{x_k}
\right)\,
dx
\nonumber
\\
&\quad
+
\sum_{i,k=1}^m
\int_{\TT^m}
h\left(
R(U_{x_i}, \nabla_{\ep}J_UU_{x_i})J_UU_{x_k}, 
U_{x_k}
\right)\,
dx
\nonumber
\\
&\quad
+
\sum_{i,k=1}^m
\int_{\TT^m}
h\left(
R(U_{x_i}, J_UU_{x_i})\nabla_{\ep}J_UU_{x_k}, 
U_{x_k}
\right)\,
dx
\nonumber
\\
&\quad
+
\sum_{i,k=1}^m
\int_{\TT^m}
h\left(
R(U_{x_i}, J_UU_{x_i})J_UU_{x_k}, 
\nabla_{\ep}U_{x_k}
\right)\,
dx
\nonumber
\\
&=
2
\sum_{i,k=1}^m
\int_{\TT^m}
h\left(
R(\nabla_{\ep}U_{x_i}, J_UU_{x_i})J_UU_{x_k}, 
U_{x_k}
\right)\,
dx
\nonumber
\\
&\quad
+
2
\sum_{i,k=1}^m
\int_{\TT^m}
h\left(
R(U_{x_i}, \nabla_{\ep}J_UU_{x_i})J_UU_{x_k}, 
U_{x_k}
\right)\,
dx. 
\nonumber
\end{align}
Since $(N,J,h)$ is a K\"ahler manifold, 
$\nabla_{\ep}J_U=J_U\nabla_{\ep}$ holds. 
Using this and (vi) with $u=U$, we have
\begin{align}
h\left(
R(U_{x_i}, \nabla_{\ep}J_UU_{x_i})J_UU_{x_k}, 
U_{x_k}
\right)
&=
h\left(
R(U_{x_i}, J_U\nabla_{\ep}U_{x_i})J_UU_{x_k}, 
U_{x_k}
\right)
\nonumber
\\
&=
h\left(
R(\nabla_{\ep}U_{x_i}, J_U U_{x_i})J_UU_{x_k}, 
U_{x_k}
\right), 
\nonumber
\end{align}
which implies 
\begin{align}
\frac{d}{d \ep}
E_{\star}(U)
&=
4
\sum_{i,k=1}^m
\int_{\TT^m}
h\left(
R(\nabla_{\ep}U_{x_i}, J_UU_{x_i})J_UU_{x_k}, 
U_{x_k}
\right)\,
dx.
\nonumber
\end{align}
Furthermore, by integrating by parts and by 
using $\nabla_{\ep}U_{x_i}=\nabla_{x_i}U_{\ep}$, 
(i), (ii) and (vi) with $u=U$, 
we obtain 
\begin{align}
\frac{d}{d \ep}
E_{\star}(U)
&=
4
\sum_{i,k=1}^m
\int_{\TT^m}
h\left(
R(\nabla_{x_i}U_{\ep}, J_UU_{x_i})J_UU_{x_k}, 
U_{x_k}
\right)\,
dx
\nonumber
\\
&=
-4
\sum_{i,k=1}^m
\int_{\TT^m}
h\left(
R(U_{\ep}, J_U\nabla_{x_i}U_{x_i})J_UU_{x_k}, 
U_{x_k}
\right)\,
dx
\nonumber
\\
&\quad
-4
\sum_{i,k=1}^m
\int_{\TT^m}
h\left(
R(U_{\ep}, J_UU_{x_i})J_U\nabla_{x_i}U_{x_k}, 
U_{x_k}
\right)\,
dx
\nonumber
\\
&\quad
-4
\sum_{i,k=1}^m
\int_{\TT^m}
h\left(
R(U_{\ep}, J_UU_{x_i})J_UU_{x_k}, 
\nabla_{x_i}U_{x_k}
\right)\,
dx
\nonumber
\\
&=
-4
\sum_{i,k=1}^m
\int_{\TT^m}
h\left(
R(U_{\ep}, J_U\nabla_{x_i}U_{x_i})J_UU_{x_k}, 
U_{x_k}
\right)\,
dx
\nonumber
\\
&\quad
+8
\sum_{i,k=1}^m
\int_{\TT^m}
h\left(
R(U_{\ep}, J_UU_{x_i})\nabla_{x_i}U_{x_k}, 
J_UU_{x_k}
\right)\,
dx.
\nonumber
\end{align}
Note that 
$U_{\ep}(x,\ep)|_{\ep =0}=\xi(x)$
and $U(x,\ep)|_{\ep=0}=u(x)$ 
follow from the definition of the map $U$. 
Therefore, by letting $\ep\to 0$ and by using (ii), we get 
\begin{align}
\frac{d}{d \ep}
E_{\star}(U)
\biggl|_{\ep=0}
&=
-4
\sum_{i,k=1}^m
\int_{\TT^m}
h\left(
R(\xi, J_u\nabla_{x_i}u_{x_i})J_uu_{x_k}, 
u_{x_k}
\right)\,
dx
\nonumber
\\
&\quad
+8
\sum_{i,k=1}^m
\int_{\TT^m}
h\left(
R(\xi, J_uu_{x_i})\nabla_{x_i}u_{x_k}, 
J_uu_{x_k}
\right)\,
dx
\nonumber
\\
&=
-4
\sum_{i,k=1}^m
\int_{\TT^m}
h\left(
\xi, 
R(u_{x_k}, J_uu_{x_k})J_u\nabla_{x_i}u_{x_i}
\right)\,
dx
\nonumber
\\
&\quad
+8
\sum_{i,k=1}^m
\int_{\TT^m}
h\left(
\xi, 
R(J_uu_{x_k}, \nabla_{x_i}u_{x_k})J_uu_{x_i}
\right)\,
dx.
\nonumber
\end{align}
Moreover, noting $h$ is invariant under $J_u$ and $J_u^2=-I_d$, 
we use (iii) and (iv)  
to deduce  
\begin{align}
&h\left(
\xi, 
R(J_uu_{x_k}, \nabla_{x_i}u_{x_k})J_uu_{x_i}
\right)
\nonumber
\\
&=
-h\left(
J_u\xi, 
R(J_uu_{x_k}, \nabla_{x_i}u_{x_k})u_{x_i}
\right)
\quad
(\because \text{(iv)})
\nonumber
\\
&=
h\left(
J_u\xi, R( \nabla_{x_i}u_{x_k}, u_{x_i})J_uu_{x_k}
\right)
+
h\left(
J_u\xi, R(u_{x_i},J_uu_{x_k})\nabla_{x_i}u_{x_k}
\right)
\quad (\because \text{(iii)})
\nonumber
\\
&=
h\left(
\xi, R( \nabla_{x_i}u_{x_k}, u_{x_i})u_{x_k}
\right)
-h\left(
\xi, R(u_{x_i},J_uu_{x_k})J_u\nabla_{x_i}u_{x_k}
\right).
\quad
(\because \text{(iv)})
\nonumber
\end{align}
Substituting this, we obtain 
\begin{align}
\frac{d}{d \ep}
E_{\star}(U)
\biggl|_{\ep=0}
&=
-4
\sum_{i,k=1}^m
\int_{\TT^m}
h\left(
\xi, 
R(u_{x_k}, J_uu_{x_k})J_u\nabla_{x_i}u_{x_i}
\right)\,
dx
\nonumber
\\
&\quad
+8
\sum_{i,k=1}^m
\int_{\TT^m}
h\left(
\xi, R( \nabla_{x_i}u_{x_k}, u_{x_i})u_{x_k}
\right)\,dx
\nonumber
\\
&\quad 
-8
\sum_{i,k=1}^m
\int_{\TT^m}
h\left(
\xi, R(u_{x_i},J_uu_{x_k})J_u\nabla_{x_i}u_{x_k}
\right)
\,dx.
\nonumber
\end{align}
Thus, comparing with \eqref{eq:grad}, we obtain  
\begin{align}
\nabla E_{\star}(u)
&=
-4
\sum_{i,k=1}^m
R(u_{x_k}, J_uu_{x_k})J_u\nabla_{x_i}u_{x_i}
+
8
\sum_{i,k=1}^m
R( \nabla_{x_i}u_{x_k}, u_{x_i})u_{x_k}
\nonumber
\\
&\quad 
-8
\sum_{i,k=1}^m
R(u_{x_i},J_uu_{x_k})J_u\nabla_{x_i}u_{x_k}. 
\label{eq:IIIIII}
\end{align} 
In the same way, we can compute to see  
\begin{align}
\nabla E(u)
&=-\sum_{k=1}^m\nabla_{x_k}u_{x_k},
\label{eq:IIII}
\\
\nabla E_2(u)
&=
\sum_{k,\ell=1}^m
\left\{
\nabla_{x_k}^2
\nabla_{x_{\ell}}u_{x_{\ell}}
+
R(\nabla_{x_{\ell}}u_{x_{\ell}}, u_{x_k})u_{x_k}
\right\}.
\label{eq:IIIII}
\end{align}
We omit the detail, 
since \eqref{eq:IIII} and \eqref{eq:IIIII} 
are already well-known. 
(See, e.g., \cite[Section~2.2]{KLPST} for \eqref{eq:IIIII}.)
Indeed, 
the form of the right hand side of 
\eqref{eq:IIII} and \eqref{eq:IIIII}
actually agrees with that of 
the harmonic map equation and the bi-harmonic map equation 
respectively.  
\par
Combining \eqref{eq:IIIIII}, \eqref{eq:IIII}, and 
\eqref{eq:IIIII}, 
we obtain 
\begin{align}
\nabla E_{\alpha,\beta,\gamma}(u)
&=
\beta\,\sum_{k,\ell=1}^m
\nabla_{x_k}^2
\nabla_{x_{\ell}}u_{x_{\ell}}
-\alpha\,\sum_{k=1}^m\nabla_{x_k}u_{x_k}
\nonumber
\\
&\quad 
+
\beta\,\sum_{k,\ell=1}^m
R(\nabla_{x_{\ell}}u_{x_{\ell}}, u_{x_k})u_{x_k}
-4\gamma\,
\sum_{i,k=1}^m
R(u_{x_k}, J_uu_{x_k})J_u\nabla_{x_i}u_{x_i}
\nonumber
\\
&\quad 
+8\gamma\,
\sum_{i,k=1}^m
R( \nabla_{x_i}u_{x_k}, u_{x_i})u_{x_k}
-8\gamma\,
\sum_{i,k=1}^m
R(u_{x_i},J_uu_{x_k})J_u\nabla_{x_i}u_{x_k}. 
\nonumber 
\end{align}
Therefore,  
by using (iv) and (vi), 
we see that the partial differential equation 
for the generalized bi-Schr\"odinger flow is 
governed by 
\begin{align}
u_t
&=
\beta\,J_u\sum_{k,\ell=1}^m
\nabla_{x_k}^2
\nabla_{x_{\ell}}u_{x_{\ell}}
-\alpha\,J_u\sum_{k=1}^m\nabla_{x_k}u_{x_k}
\nonumber
\\
&\quad 
+
\beta\,\sum_{k,\ell=1}^m
R(\nabla_{x_{\ell}}u_{x_{\ell}}, u_{x_k})J_uu_{x_k} 
+4\gamma\,
\sum_{i,k=1}^m
R(u_{x_k}, J_uu_{x_k})\nabla_{x_i}u_{x_i}
\nonumber
\\
&\quad 
+8\gamma\,
\sum_{i,k=1}^m
R( \nabla_{x_i}u_{x_k}, u_{x_i})J_uu_{x_k}
+8\gamma\,
\sum_{i,k=1}^m
R(u_{x_i},J_uu_{x_k})\nabla_{x_i}u_{x_k}
\nonumber
\\
&=
\beta\,J_u\sum_{k,\ell=1}^m
\nabla_{x_k}^2
\nabla_{x_{\ell}}u_{x_{\ell}}
-\alpha\,J_u\sum_{k=1}^m\nabla_{x_k}u_{x_k}
\nonumber
\\
&\quad 
+
\beta\,\sum_{k,\ell=1}^m
R(\nabla_{x_{\ell}}u_{x_{\ell}}, u_{x_k})J_uu_{x_k} 
-4\gamma\,
\sum_{i,k=1}^m
R(J_uu_{x_k}, u_{x_k})\nabla_{x_i}u_{x_i}
\nonumber
\\
&\quad 
+8\gamma\,
\sum_{i,k=1}^m
R( \nabla_{x_i}u_{x_k}, u_{x_i})J_uu_{x_k}
-8\gamma\,
\sum_{i,k=1}^m
R(J_uu_{x_i},u_{x_k})\nabla_{x_i}u_{x_k}.
\label{eq:high_bi_SF}
\end{align}
\par 
Specifically if $m=1$, 
it is obvious that
the equation \eqref{eq:high_bi_SF} 
reads 
\begin{align}
u_t
&=
\beta\,J_u\nabla_x^3u_x
-\alpha\,J_u\nabla_{x}u_{x}
\nonumber
\\
&\quad 
+
(\beta+8\gamma)\,
R(\nabla_{x}u_{x}, u_{x})J_uu_{x} 
-12\gamma\,
R(J_uu_{x}, u_x)\nabla_{x}u_{x}, 
\label{eq:1_bi_SF}
\end{align}
which is just \eqref{eq:pde} 
where 
$a=\beta$, $\lambda=-\alpha$, 
$b=\beta+8\gamma$, 
$c=-12\gamma$. 
\subsection{The relationship among \eqref{eq:pde}, 
\eqref{eq:pdes2}, and \eqref{eq:pde9th}}
\label{subsection:sub2}
Suppose that $(N,J,h)$ is a Riemann surface 
with constant sectional curvature $S$. 
Let $u$ be a smooth solution to \eqref{eq:pde}. 
By the definition of the sectional curvature, 
$$
R(X,Y)Z
=S
\left\{
h(Y,Z)X-h(X,Z)Y
\right\}
$$
holds for any $X,Y,Z\in \Gamma(u^{-1}TN)$. 
Using this, (iv), and $J_u^2=-I_d$,  
we have 
\begin{align}
R(\nabla_xu_x,u_x)J_uu_x
&= 
J_uR(\nabla_xu_x,u_x)u_x
\nonumber
\\
&=
S\,h(u_x,u_x)J_u\nabla_xu_x
-
S\,h(\nabla_xu_x,u_x)J_uu_x, 
\label{eq:dore1}
\\
R(J_uu_x,u_x)\nabla_xu_x
&=
S\,\left\{
h(u_x,\nabla_xu_x)J_uu_x
-h(J_uu_x, \nabla_xu_x)u_x
\right\}
\nonumber
\\
&=
S\,
h(\nabla_xu_x,u_x)J_uu_x
+
S\,J_uh(\nabla_xu_x, J_uu_x)J_uu_x. 
\label{eq:dore2}
\end{align} 
Since $N$ is here a two-dimensional real manifold, 
$\dfrac{u_x(t,x)}{|u_x(t,x)|_h}$ 
and $\dfrac{J_{u(t,x)}u_x(t,x)}{|u_x(t,x)|_h}$ 
form a basis for 
$T_{u(t,x)}N$ if $u_x(t,x)\ne 0$. 
Therefore, we notice that 
\begin{equation}
h(u_x,u_x)Y=
h(Y,u_x)u_x
+
h(Y,J_uu_x)J_uu_x
\label{eq:frame}
\end{equation}
holds for any $Y\in \Gamma(u^{-1}TN)$. 
Using \eqref{eq:frame} with $Y=\nabla_xu_x$,  
we have 
\begin{align}
J_u\,h(\nabla_xu_x,J_uu_x)J_uu_x
&=
J_u\left\{
h(u_x,u_x)\nabla_xu_x
-h(\nabla_xu_x,u_x)u_x
\right\}
\nonumber
\\
&=
h(u_x,u_x)J_u\nabla_xu_x
-
h(\nabla_xu_x,u_x)J_uu_x. 
\label{eq:dore3}
\end{align}
Substituting \eqref{eq:dore3} into 
\eqref{eq:dore2}, 
we have 
\begin{equation}
R(J_uu_x,u_x)\nabla_xu_x
=
S\,h(u_x,u_x)J_u\nabla_xu_x.
\label{eq:dore4}
\end{equation} 
Collecting \eqref{eq:dore1} and \eqref{eq:dore4}, 
we see that the equation \eqref{eq:pde} reads 
\begin{equation}
u_t
=a\,J_u\nabla_x^3u_x
+\lambda\,J_u\nabla_xu_x
+(b+c)S\,h(u_x,u_x)J_u\nabla_xu_x
-bS\,h(\nabla_xu_x,u_x)J_uu_x,  
\nonumber
\end{equation}
which is nothing but \eqref{eq:pde9th} 
where $b_1=a$, $b_2=\lambda$, 
$b_3=(b+c)S$, and $b_4=-bS$.  
\par 
Furthermore, it follows from \cite{onodera0} that
the $\mathbb{S}^2$-valued model 
\eqref{eq:pdes2} 
is reformulated as 
\eqref{eq:pde9th}  
where $N$ is the canonical two-sphere $\mathbb{S}^2$ 
with $S=1$
and 
$b_1=a_1$, $b_2=1$, $b_3=a_2-a_1$, $b_4=-5a_1+2a_2$. 
In other words, \eqref{eq:pdes2} is reformulated as
\eqref{eq:pde} where $N=\mathbb{S}^2$, 
$a=a_1$, $\lambda=1$, $b=5a_1-2a_2$, $c=-6a_1+3a_2$.
\section{Local Existence}
\label{section:existence}
The purpose of this section is to prove Theorem~\ref{theorem:main}.
\begin{proof}[Proof of Theorem~\ref{theorem:main}]
Suppose $k\geqslant 4$. 
It suffices to construct a solution in the positive time direction. 
We start with the case where $u_{0}\in C^{\infty}(\TT;N)$. 
We can construct a family of parabolic regularized 
approximating solutions by solving   
\begin{alignat}{2}
 & u_t
  =
 (-\ep + a\,J_u)\nabla_x^3u_x
  \nonumber 
  \\
  &\quad \quad
  +
  b\, R(\nabla_xu_x,u_x)J_uu_x
  +
  c\,R(J_uu_x,u_x)\nabla_xu_x
  +\lambda\, J_u\nabla_xu_x
 \quad &\text{in}\quad
 &(0,\infty){\times} \mathbb{T},
\label{eq:eppde}
\\
& u(0,x)
  =
  u_0(x)
\quad&\text{in}\quad &\mathbb{T}
\label{eq:epdata}
\end{alignat} 
for each fixed $\ep\in (0,1]$. 
Since the parabolic term 
$-\ep\,\nabla_x^3u_x$ is added, 
we can handle \eqref{eq:eppde} as a fourth-order 
quasilinear parabolic system. 
In fact, we can show the following:    
\begin{lemma}
\label{lemma:parabolic}  
For each $\ep\in (0,1]$, 
there exists a positive constant $T_{\ep}$ 
depending on $\ep$ and $\|u_{0x}\|_{H^4(\TT;TN)}$ 
such that  
\eqref{eq:eppde}-\eqref{eq:epdata} possesses a 
unique solution $u\in C^{\infty}([0,T_{\ep}]\times \TT;N)$. 
\end{lemma}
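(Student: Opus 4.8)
The plan is to translate \eqref{eq:eppde}--\eqref{eq:epdata} into a strongly parabolic quasilinear system for a $\RR^d$-valued unknown by means of an isometric embedding $N\hookrightarrow\RR^d$, to solve that system by the standard short-time theory for quasilinear parabolic equations, and to recover the $N$-valued solution by checking that the embedding constraint is preserved. First I would fix a smooth isometric embedding $\iota\colon N\hookrightarrow\RR^d$ and extend the complex structure $J$ (to a skew-symmetric $d\times d$ matrix-valued function) and the orthogonal projection $P$ onto $TN$ smoothly to a tubular neighborhood $\UU$ of $\iota(N)$. Writing $w=\iota\circ u\colon\TT\to\RR^d$, an induction on the order of differentiation (keeping track of the second fundamental form of $\iota(N)$) shows that for every $m$ the normal component $(I-P(w))\p_x^m w$ is a smooth function of $(w,\p_x w,\dots,\p_x^{m-1}w)$; hence $\nabla_x^3 u_x=w_{xxxx}+(\text{a smooth function of }w,\p_x w,\p_x^2 w,\p_x^3 w)$, and more generally $(-\ep I+a\,J_u)\nabla_x^3 u_x=(-\ep I+a\,J(w))w_{xxxx}+(\text{lower order})$. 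Thus, for maps into $\iota(N)$, the equation \eqref{eq:eppde} becomes
\[
w_t=\bigl(-\ep I+a\,J(w)\bigr)w_{xxxx}+\mathcal F(w,w_x,w_{xx},w_{xxx}),
\]
where $\mathcal F$ is a fixed smooth map on $\UU$, and by construction the right-hand side of this system is tangent to $\iota(N)$ whenever $w$ takes values in $\iota(N)$. Its principal symbol $(-\ep I+a\,J(w))\xi^4$ is sectorial: since $J(w)$ is skew-symmetric, every eigenvalue of $-\ep I+a\,J(w)$ has real part $-\ep$, so the eigenvalues of the symbol have real part $-\ep\,\xi^4<0$ for $\xi\ne0$. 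Consequently, for each fixed $\ep\in(0,1]$, the $w$-system is uniformly strongly parabolic.

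Next I would construct the local solution to the $w$-system by the usual quasilinear-parabolic argument. The linearization at a given state has principal part generating an analytic semigroup on $L^2(\TT;\RR^d)$; equivalently one has, for every $m$, an a priori inequality of the form
\[
\frac{d}{dt}\,\|w\|_{H^m(\TT;\RR^d)}^2+\ep\,\|w\|_{H^{m+2}(\TT;\RR^d)}^2\leqslant C_m\bigl(\|w\|_{H^m(\TT;\RR^d)}\bigr)
\]
with $C_m$ nondecreasing, the dissipative term absorbing all the lower-order contributions (among them exactly those that cause the loss of derivatives in the non-regularized equation, which is why the existence time will depend on $\ep$). A contraction-mapping or successive-approximation scheme then produces a unique solution on some interval $[0,T_\ep]$, with $T_\ep$ depending on $\ep$ and on $\|u_{0x}\|_{H^4(\TT;TN)}$, and with the higher Sobolev norms of $w$ staying finite on the same interval; since $u_0\in C^\infty(\TT;N)$, parabolic regularity upgrades this to $w\in C^\infty([0,T_\ep]\times\TT;\RR^d)$. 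Uniqueness in this class follows from the same energy inequality applied to the difference of two solutions.

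It remains to verify that the constraint is preserved, i.e. $w(t,x)\in\iota(N)$ for all $(t,x)$; this is the step that requires care. By continuity, $w$ stays in the tubular neighborhood $\UU$ for $t$ small, so the normal part $v:=w-\pi(w)$, with $\pi$ the nearest-point projection of $\UU$ onto $\iota(N)$, is well defined, smooth, and satisfies $v(0,\cdot)\equiv0$. Applying $\mathrm{Id}-\pi$ to the $w$-system and using that its right-hand side is tangent to $\iota(N)$ along $\iota(N)$, one finds that $v$ solves a linear fourth-order parabolic equation with smooth bounded coefficients (depending on the now known $w$) and zero initial datum, so $v\equiv0$ by uniqueness for that linear problem. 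Hence $w$ takes values in $\iota(N)$, and $u:=\iota^{-1}\circ w\in C^\infty([0,T_\ep]\times\TT;N)$ solves \eqref{eq:eppde}--\eqref{eq:epdata}, uniqueness of $u$ being inherited from that of $w$. I expect the main obstacle to be precisely this last step together with the design of the extension: the equation must be extended off $\iota(N)$ so as to be simultaneously strongly parabolic in all directions of $\RR^d$ and tangent to $\iota(N)$ along $\iota(N)$; once this is arranged, the argument for $v\equiv0$ and the rest of the construction are the standard theory of quasilinear parabolic systems on the torus, carried out as in the treatments of the parabolic regularizations of the Schr\"odinger map flow and of the third- and fourth-order dispersive flows (cf. \cite{koiso,chihara,onodera3,onodera4,CO2}).
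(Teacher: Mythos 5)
Your proposal takes a genuinely different route from the paper. The paper spends no effort on this lemma: it simply invokes the argument of \cite[Lemma~3.1]{CO2} and \cite[Lemma~2.2]{onodera4}, which treats the $\ep$-regularized equation by adding a further sixth-order parabolic regularization, solving that auxiliary problem, and removing the extra parameter by the geometric (intrinsic) classical energy method; the point there is that, for fixed $\ep$, the dissipation coming from $-\ep\nabla_x^3u_x$ absorbs, with $\ep$-dependent constants, precisely the terms that lose derivatives, so no gauge transformation is needed at this stage. Your route --- isometric embedding $\iota:N\hookrightarrow\RR^d$, a skew-symmetric extension of $J$ and a tangent extension of the nonlinearity to a tubular neighborhood, standard short-time theory for the Petrowsky-parabolic quasilinear system with principal matrix $-\ep I+a\,J(w)$, and then constraint preservation --- avoids the second approximation layer and leans on off-the-shelf parabolic machinery, at the price of the extension/tangency bookkeeping and the verification that $w$ stays on $\iota(N)$, an issue which simply does not arise in the intrinsic argument.

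One step does not hold as you state it: the claim that $v=w-\pi(w)$ ``solves a linear fourth-order parabolic equation'' and hence vanishes by parabolic uniqueness. Applying $I-d\pi(w)$ to the $w$-system produces a linear system for $v$ whose principal coefficient is $(I-d\pi(w))\bigl(-\ep I+a\,J(w)\bigr)$, and along $\iota(N)$ this equals $-\ep\,(I-P)$ up to $O(|v|)$; it annihilates all tangential directions, so the system is degenerate, not parabolic, and uniqueness for linear parabolic problems cannot be quoted. The conclusion $v\equiv 0$ is nevertheless correct and cheap to obtain: since $v$ is by construction orthogonal to $T_{\pi(w)}\iota(N)$, one has $\lr{(I-d\pi(w))X,v}=\lr{X,v}+O(|v|)\,|X|$, so testing the $v$-equation with $v$ in $L^2(\TT;\RR^d)$ restores the full matrix $-\ep I+a\,J(w)$ in the top-order pairing; two integrations by parts, the skew-symmetry of the extended $J$, interpolation of $\|\p_x v\|_{L^2}$ and $\|\p_x^2v\|_{L^2}$, and the boundedness of $\p_x^jw$, $j\leqslant 4$, on $[0,T_\ep]\times\TT$ then give $\frac{d}{dt}\|v\|_{L^2}^2\leqslant C\|v\|_{L^2}^2$, whence $v\equiv0$ by Gronwall and zero initial datum. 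With this replacement (and with the extension of $J$ chosen, e.g., as the composition of $PJP$ with the nearest-point projection, so that skew-symmetry, tangency along $\iota(N)$, and strong parabolicity hold simultaneously), your argument goes through and yields the lemma, including the dependence of $T_\ep$ only on $\ep$ and $\|u_{0x}\|_{H^4(\TT;TN)}$.
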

Lemma~\ref{lemma:parabolic} 
can be easily proved 
by following the argument in 
\cite[Lemma~3.1]{CO2} or \cite[Lemma~2.2]{onodera4}.  
The argument is based on the mix of a sixth-order 
parabolic regularization and 
geometric classical energy method. 
In what follows, 
we denote by $u^{\ep}$ the solution to 
\eqref{eq:eppde}-\eqref{eq:epdata} in 
Lemma~\ref{lemma:parabolic}. 
This presents the family $\left\{u^{\ep}\right\}_{\ep\in(0,1]}$. 
\par  
We next obtain a uniform lower bound 
$T$ of $\left\{T_{\ep}\right\}_{\ep\in (0,1]}$ 
and show that $\left\{u^{\ep}_x\right\}_{\ep\in (0,1]}$ 
is bounded in $L^{\infty}(0,T;H^k(\TT;TN))$. 
The classical 
energy estimate for  $\|u^{\ep}_x\|_{H^k(\TT;TN)}$  
is found to cause loss of derivatives. 
To eliminate the loss of derivatives,   
we introduce a gauge transformed function $V^{\ep}_k$ 
defined by   
\begin{align}
V^{\ep}_k
&=
\nabla_x^ku_x^{\ep}
+
\Lambda^{\ep}, 
\label{eq:V_m}
\end{align}
where $\Lambda^{\ep}
=
\Lambda^{\ep}_1
+
\Lambda^{\ep}_2$ 
with 
\begin{align}
\Lambda_1^{\ep}
&:=
-\frac{e_1}{2a}\,
R(\nabla_x^{k-2}u_x^{\ep},u_x^{\ep})u_x^{\ep}
=-\frac{e_1}{2a}\Phi_1\nabla_x^{-2}(\nabla_x^ku_x^{\ep}), 
\nonumber
\\
\Lambda_2^{\ep}
&:=
\frac{e_2}{8a}\,
R(J_{u^{\ep}}u_x^{\ep},u_x^{\ep})J_{u^{\ep}}\nabla_x^{k-2}u_x^{\ep}
=\frac{e_2}{8a}\Phi_2\nabla_x^{-2}(\nabla_x^ku_x^{\ep}).  
\nonumber
\end{align}
Here $\Phi_1$ and $\Phi_2$ are defined in Introduction 
and $e_1, e_2\in\RR$ are real constants to be decided later. 
Instead of the energy estimate for  $\|u^{\ep}_x\|_{H^k(\TT;TN)}$, 
we consider the estimate for the energy 
$N_k(u^{\ep})$ defined by  
\begin{equation}
N_k(u^{\ep}(t))
=
\sqrt{
\|u_x^{\ep}(t)\|_{H^{k-1}(\TT;TN)}^2
+
\|V_k^{\ep}(t)\|_{L^2(\TT;TN)}^2
}
\label{eq:N_k}
\end{equation}
for $t\in [0,T^{\ep}]$. 
Before the estimate, 
we restrict the time interval on $[0,T_{\ep}^{\star}]$ 
with $T^{\star}_{\ep}$ defined by 
$$
T^{\star}_{\ep}
=
\sup
\left\{
T>0 \ | \ 
N_4(u^{\ep}(t))\leqslant 2N_4(u_0)
\quad 
\text{for all}
\quad
t\in[0,T]
\right\}. 
$$
The restriction of the time-interval and the Sobolev embedding 
ensure the existence of an 
$\ep$-independent constant 
$C=C(\|u_{0x}\|_{H^4(\TT;TN)})>1$ 
such that 
\begin{equation}
\frac{1}{C}N_k(u^{\ep}(t))
\leqslant 
\|u_x^{\ep}(t)\|_{H^k(\TT;TN)}
\leqslant 
C\,N_k(u^{\ep}(t))
\quad \
\text{for any}
\quad \
t\in [0,T_{\ep}^{\star}]. 
\label{eq:timecut}
\end{equation}
The equivalence of $N_k(u^{\ep})$
and $\|u_x^{\ep}\|_{H^k(\TT;TN)}$ on 
$[0,T_{\ep}^{\star}]$ will be used frequently below. 
We shall show that there exists  
$T=T(\|u_{0x}\|_{H^4(\TT;TN)})>0$ 
which is independent of $\ep\in (0,1]$ and $k$
such that $T^{\star}_{\ep}\geqslant T$ uniformly in $\ep\in (0,1]$ 
and that $\left\{N_k(u^{\ep})\right\}_{\ep\in (0,1]}$ 
is bounded in $L^{\infty}(0,T)$. 
If it is true, this together with \eqref{eq:timecut} 
implies that $\left\{u_x^{\ep}\right\}_{\ep\in (0,1]}$ is 
bounded in $L^{\infty}(0,T;H^k(\TT;TN))$. 
\par 
Having them in mind, 
we move on to the  
estimate for $N_k(u^{\ep})$.  
We set $u=u^{\ep}$, $V_k=V_k^{\ep}$, 
$\Lambda=\Lambda^{\ep}$, 
$\Lambda_1=\Lambda_1^{\ep}$, $\Lambda_2=\Lambda_2^{\ep}$,
$\|\cdot\|_{H^0(\TT;TN)}=\|\cdot\|_{L^2(\TT;TN)}=\|\cdot\|_{L^2}$, 
$\|\cdot\|_{H^m(\TT;TN)}=\|\cdot\|_{H^m}$ for $m=1,\ldots,k$, 
and 
$|\cdot|_h=\left\{h(\cdot,\cdot)\right\}^{1/2}$  
for ease of notation. 
Since $h$ is a hermitian metric, 
$h(J_uY_1,J_uY_2)=h(Y_1,Y_2)$ holds for any 
$Y_1,Y_2\in\Gamma(u^{-1}TN)$. 
Since $(N,J,h)$ is a K\"ahler manifold, 
$\nabla_xJ_u=J_u\nabla_x$ and $\nabla_tJ_u=J_u\nabla_t$ hold.  
Any positive constant which depends on  
$a$, $b$, $c$, $\lambda$, $k$, 
$\|u_{0x}\|_{H^4}$ 
and not on $\ep\in (0,1]$ will be denoted by the same $C$. 
Note that $k\geqslant 4$ and the Sobolev embedding 
$H^1(\TT)\subset C(\TT)$
yield 
$\|\nabla_x^4u_x\|_{L^{\infty}(0,T_{\ep}^{\star};L^2)}\leqslant C$
and $\|\nabla_x^mu_x\|_{L^{\infty}((0,T_{\ep}^{\star})\times \TT)}\leqslant C$ 
for $m=0,1,\ldots,3$. 
These properties will be used without any comment in this section.
\par 
We now investigate the energy estimate for $\|V_k\|_{L^2}^2$. 
The starting point is the observation that 
\begin{align}
\frac{1}{2}\frac{d}{dt}
\|V_k\|_{L^2}^2
&=
\int_{\TT}
h(\nabla_tV_k,V_k)
dx.
\label{eq:eqV_k}
\end{align}
To evaluate 
the right hand side
(denoted by RHS hereafter), 
we compute $\nabla_tV_k=\nabla_t\nabla_x^ku_x+\nabla_t\Lambda$ below. 
\par 
We begin with the computation of $\nabla_t\nabla_x^ku_x$. 
Recalling that $\nabla_xu_t=\nabla_tu_x$ and 
$(\nabla_x\nabla_t-\nabla_t\nabla_x)Y=R(u_x,u_t)Y$ 
for any $Y\in \Gamma(u^{-1}TN)$, 
we have
\begin{align}
\nabla_t\nabla_x^ku_x
&=
\nabla_x^{k+1}u_t
+
\sum_{m=0}^{k-1}
\nabla_x^{k-1-m}
\left\{
R(u_t,u_x)\nabla_x^mu_x
\right\}
=:
\nabla_x^{k+1}u_t
+Q
.
\label{eq:na1}
\end{align}
We compute the RHS of \eqref{eq:na1} using \eqref{eq:eppde}. 
We first look at the first term of the RHS of 
\eqref{eq:na1}. 
A simple computation shows 
\begin{align}
\nabla_x^{k+1}u_t
&=
-\ep\nabla_x^4(\nabla_x^ku_x)
+a\,J_u\nabla_x^4(\nabla_x^ku_x)
+\lambda\,J_u\nabla_x^2(\nabla_x^ku_x)
+b\,Q_{1,1}+c\,Q_{1,2}, 
\label{eq:nab_1}
\end{align}
where 
$Q_{1,1}
=
\nabla_x^{k+1}\left\{
R(\nabla_xu_x,u_x)J_uu_x
\right\}$ 
and  
$Q_{1,2}
=
\nabla_x^{k+1}\left\{
R(J_uu_x,u_x)\nabla_xu_x
\right\}$.
In the computation of $Q_{1,1}$ and $Q_{1,2}$,  
the assumption $\nabla R=0$  
is used. 
Indeed, by using \eqref{eq:sym} in the previous section 
and 
the product formula for covariant differentiation, 
we deduce 
\begin{align}
Q_{1,1}
&=
\sum_{\substack{q+r+s=k+1, \\ q,r,s\geqslant 0}}
\frac{(k+1)!}{q!r!s!}
\,
R(\nabla_x^{q+1}u_x,\nabla_x^{r}u_x)J_u\nabla_x^{s}u_x
\nonumber
\\
&=
R(\nabla_x^{k+2}u_x,u_x)J_uu_x
+(k+1)R(\nabla_x^{k+1}u_x,\nabla_xu_x)J_uu_x
\nonumber
\\
&\quad 
+(k+1)R(\nabla_x^{k+1}u_x,u_x)J_u\nabla_xu_x
+R(\nabla_xu_x, \nabla_x^{k+1}u_x)J_uu_x
\nonumber
\\
&\quad
+R(\nabla_xu_x, u_x)J_u\nabla_x^{k+1}u_x
\nonumber
\\
&\quad
+
\sum_{\substack{q+r+s=k+1, 
\\ 0\leqslant q \leqslant k-1, 
0\leqslant r,s\leqslant k}}
\frac{(k+1)!}{q!r!s!}
\,
R(\nabla_x^{q+1}u_x,\nabla_x^{r}u_x)J_u\nabla_x^{s}u_x. 
\nonumber
\intertext{Furthermore, the Sobolev embedding and the
 Gagliardo-Nirenberg inequality imply }
Q_{1,1}
&=
R(\nabla_x^{k+2}u_x,u_x)J_uu_x
+(k+1)R(\nabla_x^{k+1}u_x,\nabla_xu_x)J_uu_x
\nonumber
\\
&\quad 
+(k+1)R(\nabla_x^{k+1}u_x,u_x)J_u\nabla_xu_x
+R(\nabla_xu_x, \nabla_x^{k+1}u_x)J_uu_x
\nonumber
\\
&\quad
+R(\nabla_xu_x, u_x)J_u\nabla_x^{k+1}u_x
+
\mathcal{O}
\left(
\sum_{m=0}^k
|\nabla_x^mu_x|_h
\right). 
\label{eq:Q11}
\intertext{In the same way as  we compute $Q_{1,1}$, 
we compute $Q_{1,2}$ to obtain }
Q_{1,2}
&=
\sum_{\substack{q+r+s=k+1, \\ q,r,s\geqslant 0}}
\frac{(k+1)!}{q!r!s!}
\,
R(J_u\nabla_x^{q}u_x,\nabla_x^{r}u_x)\nabla_x^{s+1}u_x 
\nonumber
\\
&=
R(J_uu_x,u_x)\nabla_x^{k+2}u_x
+(k+1)R(J_u\nabla_xu_x,u_x)\nabla_x^{k+1}u_x
\nonumber
\\
&\quad 
+(k+1)R(J_uu_x,\nabla_xu_x)\nabla_x^{k+1}u_x
+
R(J_u\nabla_x^{k+1}u_x,u_x)\nabla_xu_x
\nonumber
\\
&\quad 
+
R(J_uu_x,\nabla_x^{k+1}u_x)\nabla_xu_x
\nonumber
\\
&\quad 
+
\sum_{\substack{q+r+s=k+1, 
\\ 0\leqslant s\leqslant k-1, 0\leqslant q,r\leqslant k}}
\frac{(k+1)!}{q!r!s!}
\,
R(J_u\nabla_x^{q}u_x,\nabla_x^{r}u_x)\nabla_x^{s+1}u_x 
\nonumber
\\
&=
\nabla_x\left\{
R(J_uu_x,u_x)\nabla_x^{k+1}u_x
\right\}
+k\,R(J_u\nabla_xu_x,u_x)\nabla_x^{k+1}u_x
\nonumber
\\
&\quad 
+k\,R(J_uu_x,\nabla_xu_x)\nabla_x^{k+1}u_x
+
R(J_u\nabla_x^{k+1}u_x,u_x)\nabla_xu_x
\nonumber
\\
&\quad 
+
R(J_uu_x,\nabla_x^{k+1}u_x)\nabla_xu_x
+
\mathcal{O}
\left(
\sum_{m=0}^k
|\nabla_x^mu_x|_h
\right).
\label{eq:Q12}
\end{align}
We next look at $Q$ which is the second term of the RHS of 
\eqref{eq:na1}.
By substituting \eqref{eq:eppde},   
\begin{align}
Q&=
-\ep\, 
\sum_{m=0}^{k-1}\nabla_x^{k-1-m}
\left\{
R(\nabla_x^3u_x,u_x)\nabla_x^mu_x
\right\}
\nonumber
\\
&\quad 
+a\,
\sum_{m=0}^{k-1}\nabla_x^{k-1-m}
\left\{
R(J_u\nabla_x^3u_x,u_x)\nabla_x^mu_x
\right\}
\nonumber
\\
&\quad 
+\lambda\,
\sum_{m=0}^{k-1}\nabla_x^{k-1-m}
\left\{
R(J_u\nabla_xu_x,u_x)\nabla_x^mu_x
\right\}
\nonumber
\\
&\quad 
+b\,
\sum_{m=0}^{k-1}\nabla_x^{k-1-m}
\left\{
R(R(\nabla_xu_x,u_x)J_uu_x,u_x)\nabla_x^mu_x
\right\}
\nonumber
\\
&\quad 
+c\,
\sum_{m=0}^{k-1}\nabla_x^{k-1-m}
\left\{
R(R(J_uu_x,u_x)\nabla_xu_x,u_x)\nabla_x^mu_x
\right\}. 
\nonumber
\end{align}
Thus, by using the Sobolev embedding 
and the Gagliardo-Nirenberg inequality, 
we obtain 
\begin{align}
Q&=
\ep\,
\mathcal{O}
(|\nabla_x^{k+2}u_x|_h+|\nabla_x^{k+1}u_x|_h)
+a\,Q_0
+
\mathcal{O}
\left(
\sum_{m=0}^k
|\nabla_x^mu_x|_h
\right), 
\nonumber
\end{align}
where 
$$
Q_0
=
\sum_{m=0}^{k-1}\nabla_x^{k-1-m}
\left\{
R(J_u\nabla_x^3u_x,u_x)\nabla_x^mu_x
\right\}.
$$
In the same way as we compute $Q_{1,1}$ and $Q_{1,2}$, 
we use the product formula for covariant differentiation 
to deduce 
\begin{align}
Q_0&=
\nabla_x^{k-1}
\left\{
R(J_u\nabla_x^3u_x,u_x)u_x
\right\}
+
\nabla_x^{k-2}
\left\{
R(J_u\nabla_x^3u_x,u_x)\nabla_xu_x
\right\}
\nonumber
\\
&\quad 
+
\mathcal{O}
\left(
\sum_{m=0}^k
|\nabla_x^mu_x|_h
\right)
\nonumber
\\
&=
\sum_{\substack{q+r+s=k-1, \\ q,r,s\geqslant 0}}
\frac{(k-1)!}{q!r!s!}
\,
R(J_u\nabla_x^{q+3}u_x,\nabla_x^{r}u_x)\nabla_x^{s}u_x 
\nonumber
\\
&\quad 
+
R(J_u\nabla_x^{k+1}u_x,u_x)\nabla_xu_x
+
\mathcal{O}
\left(
\sum_{m=0}^k
|\nabla_x^mu_x|_h
\right)
\nonumber
\\
&=
R(J_u\nabla_x^{k+2}u_x,u_x)u_x
+
(k-1)R(J_u\nabla_x^{k+1}u_x,\nabla_xu_x)u_x
\nonumber
\\
&\quad 
+
k\,R(J_u\nabla_x^{k+1}u_x,u_x)\nabla_xu_x
+
\mathcal{O}
\left(
\sum_{m=0}^k
|\nabla_x^mu_x|_h
\right).
\nonumber
\end{align} 
Therefore, we have 
\begin{align}
Q&=
\ep\,
\mathcal{O}
(|\nabla_x^{k+2}u_x|_h+|\nabla_x^{k+1}u_x|_h)
+a\,R(J_u\nabla_x^{k+2}u_x,u_x)u_x
\nonumber
\\
&\quad+
a(k-1)R(J_u\nabla_x^{k+1}u_x,\nabla_xu_x)u_x 
+
ak\,R(J_u\nabla_x^{k+1}u_x,u_x)\nabla_xu_x
\nonumber
\\
&\quad 
+
\mathcal{O}
\left(
\sum_{m=0}^k
|\nabla_x^mu_x|_h
\right).  
\label{eq:nab_2}
\end{align}
By collecting \eqref{eq:nab_2} and \eqref{eq:nab_1} with 
\eqref{eq:Q11} and with \eqref{eq:Q12}, we have 
\begin{align}
\nabla_t\nabla_x^ku_x
&=
\left\{
-\ep\nabla_x^4
+a\,J_u\nabla_x^4
+\lambda\,J_u\nabla_x^2
\right\}
\nabla_x^ku_x
+
\ep\,
\mathcal{O}
\left(
|\nabla_x^{k+2}u_x|_h+|\nabla_x^{k+1}u_x|_h
\right)
\nonumber
\\
&\quad 
+a\,R(J_u\nabla_x^{k+2}u_x,u_x)u_x
+b\,R(\nabla_x^{k+2}u_x,u_x)J_uu_x
\nonumber
\\
&\quad
+c\,\nabla_x\left\{
R(J_uu_x,u_x)\nabla_x^{k+1}u_x
\right\}
+b\,R(\nabla_xu_x, u_x)J_u\nabla_x^{k+1}u_x
\nonumber
\\
&\quad+
a(k-1)R(J_u\nabla_x^{k+1}u_x,\nabla_xu_x)u_x 
+
(ak+c)\,R(J_u\nabla_x^{k+1}u_x,u_x)\nabla_xu_x
\nonumber
\\
&\quad 
+
c\,R(J_uu_x,\nabla_x^{k+1}u_x)\nabla_xu_x
+b(k+1)R(\nabla_x^{k+1}u_x,\nabla_xu_x)J_uu_x
\nonumber
\\
&\quad
+b\,R(\nabla_xu_x, \nabla_x^{k+1}u_x)J_uu_x
+b(k+1)R(\nabla_x^{k+1}u_x,u_x)J_u\nabla_xu_x
\nonumber
\\
&\quad 
+ck\,R(J_u\nabla_xu_x,u_x)\nabla_x^{k+1}u_x
+ck\,R(J_uu_x,\nabla_xu_x)\nabla_x^{k+1}u_x
\nonumber
\\
&\quad 
+\mathcal{O}
\left(
\sum_{m=0}^k
|\nabla_x^mu_x|_h
\right).
\label{eq:maya}
\end{align}
Here, 
using (i)-(vi) in the previous section, 
we can rewrite the each term of 
the RHS of \eqref{eq:maya} 
separately. 
The result of the computation is as follows: 
\begin{align}
&R(J_u\nabla_x^{k+2}u_x,u_x)u_x
=
-R(\nabla_x^{k+2}u_x,J_uu_x)u_x,
\quad
(\because \text{(vi)})
\label{eq:rr1}
\\
&R(\nabla_x^{k+2}u_x,u_x)J_uu_x
\nonumber
\\
&\quad=
-R(u_x,J_uu_x)\nabla_x^{k+2}u_x
-R(J_uu_x, \nabla_x^{k+2}u_x)u_x
\quad
(\because \text{(iii)})
\nonumber
\\
&\quad=
R(J_uu_x,u_x)\nabla_x^{k+2}u_x
+R(\nabla_x^{k+2}u_x,J_uu_x)u_x
\quad
(\because \text{(i)})
\nonumber
\\
&\quad=
\nabla_x\left\{
R(J_uu_x,u_x)\nabla_x^{k+1}u_x
\right\}
-R(J_u\nabla_xu_x,u_x)\nabla_x^{k+1}u_x
\nonumber
\\
&\quad \quad
-R(J_uu_x,\nabla_xu_x)\nabla_x^{k+1}u_x
+R(\nabla_x^{k+2}u_x,J_uu_x)u_x
\nonumber
\\
&\quad=
\nabla_x\left\{
R(J_uu_x,u_x)\nabla_x^{k+1}u_x
\right\}
-2\,R(J_u\nabla_xu_x,u_x)\nabla_x^{k+1}u_x
\nonumber
\\
&\quad \quad 
+R(\nabla_x^{k+2}u_x,J_uu_x)u_x, 
\quad
(\because (\text{vi}))
\label{eq:rr2}
\\
&R(J_u\nabla_x^{k+1}u_x,u_x)\nabla_xu_x
=
R(J_uu_x,\nabla_x^{k+1}u_x)\nabla_xu_x 
\quad
(\because (\text{vi}))
\label{eq:rr3}
\\
&\quad=
-R(\nabla_x^{k+1}u_x,\nabla_xu_x)J_uu_x
-R(\nabla_xu_x,J_uu_x)\nabla_x^{k+1}u_x
\quad
(\because \text{(iii)})
\nonumber
\\
&\quad=
-R(\nabla_x^{k+1}u_x,\nabla_xu_x)J_uu_x
+
R(J_u\nabla_xu_x,u_x)\nabla_x^{k+1}u_x,
\quad
(\because \text{(vi)})
\label{eq:rr4}
\\
&R(J_u\nabla_x^{k+1}u_x,\nabla_xu_x)u_x
\nonumber
\\
&\quad=
-R(\nabla_xu_x,u_x)J_u\nabla_x^{k+1}u_x
-R(u_x,J_u\nabla_x^{k+1}u_x)\nabla_xu_x
\quad
(\because \text{(iii)})
\nonumber
\\
&\quad=
-R(\nabla_xu_x,u_x)J_u\nabla_x^{k+1}u_x
+R(J_u\nabla_x^{k+1}u_x,u_x)\nabla_xu_x
\quad
(\because \text{(i)})
\nonumber
\\
&\quad=
-R(\nabla_xu_x,u_x)J_u\nabla_x^{k+1}u_x
-R(\nabla_x^{k+1}u_x,\nabla_xu_x)J_uu_x
\nonumber
\\
&\quad\quad
+
R(J_u\nabla_xu_x,u_x)\nabla_x^{k+1}u_x,
\quad
(\because \eqref{eq:rr4})
\label{eq:rr5}
\\
&R(\nabla_xu_x,\nabla_x^{k+1}u_x)J_uu_x
=
-R(\nabla_x^{k+1}u_x,\nabla_xu_x)J_uu_x, 
\quad
(\because (\text{i}))
\label{eq:rr6}
\\
&R(J_uu_x,\nabla_xu_x)\nabla_x^{k+1}u_x
=
R(J_u\nabla_xu_x,u_x)\nabla_x^{k+1}u_x.
\quad
(\because (\text{vi}))
\label{eq:rr7}
\end{align}
Substituting \eqref{eq:rr1}-\eqref{eq:rr7} into 
\eqref{eq:maya}, 
we obtain 
\begin{align}
\nabla_t\nabla_x^ku_x
&=
\left\{
-\ep\nabla_x^4
+a\,J_u\nabla_x^4
+\lambda\,J_u\nabla_x^2
\right\}
\nabla_x^ku_x
+
\ep\,
\mathcal{O}
\left(
|\nabla_x^{k+2}u_x|_h+|\nabla_x^{k+1}u_x|_h
\right)
\nonumber
\\
&\quad 
+(-a+b)\,R(\nabla_x^{k+2}u_x,J_uu_x)u_x
+(b+c)\,\nabla_x\left\{
R(J_uu_x,u_x)\nabla_x^{k+1}u_x
\right\}
\nonumber
\\
&\quad
+c_1\,R(J_u\nabla_xu_x, u_x)\nabla_x^{k+1}u_x
+c_2\,R(\nabla_xu_x,u_x)J_u\nabla_x^{k+1}u_x
\nonumber
\\
&\quad
+c_3\,R(\nabla_x^{k+1}u_x,\nabla_xu_x)J_uu_x
+c_4\,R(\nabla_x^{k+1}u_x,u_x)J_u\nabla_xu_x
\nonumber
\\
&\quad 
+\mathcal{O}
\left(
\sum_{m=0}^k
|\nabla_x^mu_x|_h
\right), 
\label{eq:maya12}
\end{align}
where 
\begin{align}
c_1&=
-2b+a(k-1)+(ak+c)+c+2ck
=(2k-1)a-2b+(2k+2)c, 
\label{eq:cc1}
\\
c_2&=-(k-1)a+b, 
\label{eq:cc2}
\\
c_3&=
-a(k-1)-(ak+c)-c+b(k+1)-b
=-(2k-1)a+kb-2c,
\label{eq:cc3}
\\
c_4&=(k+1)b.
\label{eq:cc4}
\end{align}
Furthermore, we rewrite  
$R(\nabla_x^{k+1}u_x, \nabla_xu_x)J_uu_x$ 
and 
$R(\nabla_x^{k+1}u_x, u_x)J_u\nabla_xu_x$ 
by using $A_i$ ($i=1,2,3$) defined 
for any $Y\in \Gamma(u^{-1}TN)$
by 
\begin{align}
A_1Y
&=
R(Y,\nabla_xu_x)J_uu_x-R(Y,u_x)J_u\nabla_xu_x,
\nonumber
\\
A_2Y
&=
R(Y,\nabla_xu_x)J_uu_x+
R(Y,J_uu_x)\nabla_xu_x,
\nonumber
\\
A_3Y&=
R(Y,u_x)J_u\nabla_xu_x
+R(Y,J_u\nabla_xu_x)u_x.
\nonumber
\end{align}
\begin{remark}
It is to be emphasized that 
$A_i$($i=1,2,3$) is symmetric, that is, 
\begin{equation}
h(A_iY,Z)=h(Y,A_iZ)
\quad 
(i=1,2,3)
\label{eq:symmm}
\end{equation}
for any $Y,Z\in \Gamma(u^{-1}TN)$. 
If $i=1$, \eqref{eq:symmm} follows from 
\begin{align}
h(A_1Y,Z)
&=
h(R(Y,\nabla_xu_x)J_uu_x,Z)
-h(R(Y,u_x)J_u\nabla_xu_x,Z)
\nonumber
\\
&=
-h(R(\nabla_xu_x,J_uu_x)Y,Z)
-h(R(J_uu_x,Y)\nabla_xu_x,Z)
\nonumber
\\
&\quad
+h(R(u_x,J_u\nabla_xu_x)Y,Z)
+h(R(J_u\nabla_xu_x,Y)u_x,Z)
\quad
(\because (\text{iii}))
\nonumber
\\
&=
-h(R(J_uu_x,Y)\nabla_xu_x,Z)
+h(R(J_u\nabla_xu_x,Y)u_x,Z)
\quad 
(\because (\text{vi}))
\nonumber
\\
&=
h(R(Z,\nabla_xu_x)J_uu_x,Y)
-h(R(Z,u_x)J_u\nabla_xu_x,Y)
\quad
(\because (\text{i}), (\text{ii}))
\nonumber
\\
&=h(A_1Z,Y).
\nonumber
\end{align}
If $i=2$ or $i=3$, 
\eqref{eq:symmm} easily follows from (i) and (ii). 
The property \eqref{eq:symmm} will be used later.
\end{remark}
By using $A_1$, $A_2$, $A_3$, we can write 
\begin{align}
4\,R(Y,\nabla_xu_x)J_uu_x
&=
2A_1Y
+
2\left\{
R(Y,\nabla_xu_x)J_uu_x+R(Y,u_x)J_u\nabla_xu_x
\right\}
\nonumber
\\
&=2A_1Y+A_2Y+A_3Y
\nonumber
\\
&\quad
+R(Y,\nabla_xu_x)J_uu_x-
R(Y,J_uu_x)\nabla_xu_x
\nonumber
\\
&\quad 
+
R(Y,u_x)J_u\nabla_xu_x
-R(Y,J_u\nabla_xu_x)u_x. 
\label{eq:notee1}
\end{align}
Here, it follows that 
\begin{align}
&R(Y,\nabla_xu_x)J_uu_x+R(Y,u_x)J_u\nabla_xu_x
\nonumber
\\
&=
-R(\nabla_xu_x,J_uu_x)Y
-R(J_uu_x,Y)\nabla_xu_x
\nonumber
\\
&\quad
-R(u_x,J_u\nabla_xu_x)Y
-R(J_u\nabla_xu_x,Y)u_x
\quad
(\because (\text{iii}))
\nonumber
\\
&=2\,R(J_u\nabla_xu_x,u_x)Y
+R(Y,J_uu_x)\nabla_xu_x+R(Y,J_u\nabla_xu_x)u_x.
\quad
(\because (\text{i}), (\text{vi}))
\label{eq:notee2}
\end{align} 
Combining \eqref{eq:notee1} and \eqref{eq:notee2}, we have 
\begin{align}
R(Y,\nabla_xu_x)J_uu_x
&=\frac{1}{2}R(J_u\nabla_xu_x,u_x)Y+\frac{1}{2}A_1Y
+\frac{1}{4}A_2Y+\frac{1}{4}A_3Y. 
\label{eq:ppp1}
\end{align}
In the same way, since 
$$
4\,R(Y,u_x)J_u\nabla_xu_x
=
-2A_1Y
+
2\left\{
R(Y,\nabla_xu_x)J_uu_x+R(Y,u_x)J_u\nabla_xu_x
\right\},
$$
we have 
\begin{align}
R(Y,u_x)J_u\nabla_xu_x
&=\frac{1}{2}R(J_u\nabla_xu_x,u_x)Y-\frac{1}{2}A_1Y
+\frac{1}{4}A_2Y+\frac{1}{4}A_3Y. 
\label{eq:ppp2}
\end{align}
Applying \eqref{eq:ppp1} and \eqref{eq:ppp2} with $Y=\nabla_x^{k+1}u_x$
to the RHS of \eqref{eq:maya12}, 
we obtain 
\begin{align}
\nabla_t\nabla_x^ku_x
&=
\left\{
-\ep\nabla_x^4
+a\,J_u\nabla_x^4
+\lambda\,J_u\nabla_x^2
\right\}
\nabla_x^ku_x
+
\ep\,
\mathcal{O}
\left(
|\nabla_x^{k+2}u_x|_h+|\nabla_x^{k+1}u_x|_h
\right)
\nonumber
\\
&\quad 
+d_1\,R(\nabla_x^{k+2}u_x,J_uu_x)u_x
+d_2\,\nabla_x\left\{
R(J_uu_x,u_x)\nabla_x^{k+1}u_x
\right\}
\nonumber
\\
&\quad
+d_3\,R(J_u\nabla_xu_x, u_x)\nabla_x^{k+1}u_x
+d_4\,R(\nabla_xu_x,u_x)J_u\nabla_x^{k+1}u_x
\nonumber
\\
&\quad
+(d_5A_1+d_6A_2+d_7A_3)\nabla_x^{k+1}u_x 
+\mathcal{O}
\left(
\sum_{m=0}^k
|\nabla_x^mu_x|_h
\right), 
\label{eq:ayaya00}
\end{align}
where $d_1=-a+b$, 
$d_2=b+c$, 
$d_3=c_1+c_3/2 +c_4/2$, 
$d_4=c_2$, 
$d_5=(c_3-c_4)/2$, 
$d_6=d_7=(c_3+c_4)/4$, 
and $c_1,\dots,c_4$ are given in 
\eqref{eq:cc1}-\eqref{eq:cc4}. 
For instance, $d_3$ is given by 
\begin{align}
d_3
&=
(2k-1)a-2b+(2k+2)c
+
\frac{1}{2}\left\{
-(2k-1)a+kb-2c
\right\}
+\frac{1}{2}(k+1)b
\nonumber
\\
&=
\left(k-\frac{1}{2}\right)a
+\left(k-\frac{3}{2}\right)b
+(2k+1)c.
\label{eq:d3}
\end{align}
The explicit form of these constants is not required 
except for $d_1$ and $d_3$. 
Furthermore, substituting  
$\nabla_x^ku_x=V_k-\Lambda$ into the RHS of \eqref{eq:ayaya00} 
and noting $\Lambda=\mathcal{O}(|\nabla_x^{k-2}u_x|_h)$, 
we find 
\begin{align}
\nabla_t\nabla_x^ku_x
&=
\ep\,\nabla_x^4\Lambda-a\,J_u\nabla_x^4\Lambda
\nonumber
\\
&\quad 
+
\left\{
-\ep\nabla_x^4
+a\,J_u\nabla_x^4
+\lambda\,J_u\nabla_x^2
\right\}
V_k
+
\ep\,
\mathcal{O}
\left(
|\nabla_x^{k+2}u_x|_h+|\nabla_x^{k+1}u_x|_h
\right)
\nonumber
\\
&\quad 
+d_1\,R(\nabla_x^{2}V_k,J_uu_x)u_x
+d_2\,\nabla_x\left\{
R(J_uu_x,u_x)\nabla_xV_k
\right\}
\nonumber
\\
&\quad
+d_3\,R(J_u\nabla_xu_x, u_x)\nabla_xV_k
+d_4\,R(\nabla_xu_x,u_x)J_u\nabla_xV_k
\nonumber
\\
&\quad
+(d_5A_1+d_6A_2+d_7A_3)\nabla_xV_k 
+\mathcal{O}
\left(
\sum_{m=0}^k
|\nabla_x^mu_x|_h
\right). 
\label{eq:ayaya01}
\end{align}
Here, a simple computation yields  
$$\ep\,\nabla_x^4\Lambda
=\ep\,\mathcal{O}(|\nabla_x^{k+2}u_x|_h+|\nabla_x^{k+1}u_x|_h)
+\mathcal{O}
\left(\sum_{m=0}^k|\nabla_x^{m}u_x|_h\right)
$$
and $\nabla_x^4\Lambda=\nabla_x^4\Phi\nabla_x^{-2}\nabla_x^ku_x$, 
where $\Phi=-(e_1/2a)\Phi_1+(e_2/8a)\Phi_2$.  
Consequently, we derive 
\begin{align}
\nabla_t\nabla_x^ku_x
&=
-a\,J_u\nabla_x^4\Phi\nabla_x^{-2}\nabla_x^ku_x 
+
\left\{
-\ep\nabla_x^4
+a\,J_u\nabla_x^4
+\lambda\,J_u\nabla_x^2
\right\}
V_k
\nonumber
\\
&\quad 
+
\ep\,
\mathcal{O}
\left(
|\nabla_x^{k+2}u_x|_h
+
|\nabla_x^{k+1}u_x|_h
\right)
\nonumber
\\
&\quad 
+d_1\,R(\nabla_x^{2}V_k,J_uu_x)u_x
+d_2\,\nabla_x\left\{
R(J_uu_x,u_x)\nabla_xV_k
\right\}
\nonumber
\\
&\quad
+d_3\,R(J_u\nabla_xu_x, u_x)\nabla_xV_k
+d_4\,R(\nabla_xu_x,u_x)J_u\nabla_xV_k
\nonumber
\\
&\quad
+(d_5A_1+d_6A_2+d_7A_3)\nabla_xV_k 
+\mathcal{O}
\left(
\sum_{m=0}^k
|\nabla_x^mu_x|_h
\right). 
\label{eq:ayaya}
\end{align}
\par  
Next, we compute 
$\nabla_t\Lambda$.
Using the product formula and noting 
that
$\nabla_tu_x=\nabla_xu_t
=\mathcal{O}
\left(
\displaystyle 
\sum_{m=0}^4|\nabla_x^mu_x|_h
\right),
$ 
we have  
\begin{align}
\nabla_t\Lambda
&=
-\frac{e_1}{2a}
R(\nabla_t\nabla_x^{k-2}u_x,u_x)u_x
-\frac{e_1}{2a}
R(\nabla_x^{k-2}u_x,\nabla_tu_x)u_x
\nonumber
\\
&\quad
-\frac{e_1}{2a}
R(\nabla_x^{k-2}u_x,u_x)\nabla_tu_x
+
\frac{e_2}{8a}R(J_uu_x,u_x)J_u\nabla_t\nabla_x^{k-2}u_x
\nonumber
\\
&\quad 
+\frac{e_2}{8a}R(J_u\nabla_tu_x,u_x)J_u\nabla_x^{k-2}u_x
+
\frac{e_2}{8a}R(J_uu_x,\nabla_tu_x)J_u\nabla_x^{k-2}u_x
\nonumber
\\
&=
-\frac{e_1}{2a}
R(\nabla_t\nabla_x^{k-2}u_x,u_x)u_x
+
\frac{e_2}{8a}R(J_uu_x,u_x)J_u\nabla_t\nabla_x^{k-2}u_x
\nonumber
\\
&\quad 
+
\mathcal{O}
\left(
|\nabla_x^{k-2}u_x|_h
\sum_{m=0}^4|\nabla_x^mu_x|_h
\right).
\label{eq:Lambda_t}
\end{align}
By the same computation as that we obtain $\nabla_t\nabla_x^{k}u_x$, 
we find 
\begin{align}
\nabla_t\nabla_x^{k-2}u_x
&=
-\ep\nabla_x^4(\nabla_x^{k-2}u_x)
+a\,J_u\nabla_x^4(\nabla_x^{k-2}u_x)
+
\mathcal{O}
\left(
\sum_{m=0}^k
|\nabla_x^mu_x|_h
\right)
\nonumber
\\
&=
\ep\,
\mathcal{O}
(|\nabla_x^{k+2}u_x|_h)
+a\,J_u\nabla_x^{k+2}u_x
+
\mathcal{O}
\left(
\sum_{m=0}^k
|\nabla_x^mu_x|_h
\right).
\label{eq:k-2}
\end{align}
Substituting \eqref{eq:k-2} into \eqref{eq:Lambda_t} 
and observing  
$a\,J_u\nabla_x^{k+2}u_x=
\nabla_x^{-2}(a\,J_u\nabla_x^4)\nabla_x^ku_x$, 
we obtain 
\begin{align}
\nabla_t\Lambda
&=
-\frac{e_1}{2a}
R(a\,J_u\nabla_x^{k+2}u_x,u_x)u_x
+
\frac{e_2}{8a}R(J_uu_x,u_x)J_ua\,J_u\nabla_x^{k+2}u_x
\nonumber
\\
&\quad 
+
\ep\,
\mathcal{O}
(|\nabla_x^{k+2}u_x|_h)
+
\mathcal{O}
\left(
\sum_{m=0}^k|\nabla_x^mu_x|_h
+
|\nabla_x^{k-2}u_x|_h
\sum_{m=0}^4|\nabla_x^mu_x|_h
\right)
\nonumber
\\
&=
\Phi\nabla_x^{-2}(a\,J_u\nabla_x^4)\nabla_x^{k}u_x 
\nonumber
\\
&\quad 
+
\ep\,
\mathcal{O}
(|\nabla_x^{k+2}u_x|_h)
+
\mathcal{O}
\left(
\sum_{m=0}^k|\nabla_x^mu_x|_h
+
|\nabla_x^{k-2}u_x|_h
\sum_{m=0}^4|\nabla_x^mu_x|_h
\right). 
\label{eq:Lambda_t2}
\end{align}
\par 
Consequently, by combining \eqref{eq:ayaya} and \eqref{eq:Lambda_t2}, 
we derive 
\begin{align}
\nabla_tV_k
&=
-[a\,J_u\nabla_x^4, \Phi\nabla_x^{-2}]\nabla_x^ku_x 
+
\left\{
-\ep\nabla_x^4
+a\,J_u\nabla_x^4
+\lambda\,J_u\nabla_x^2
\right\}
V_k
\nonumber
\\
&\quad 
+
\ep\,
\mathcal{O}
\left(
|\nabla_x^{k+2}u_x|_h
+
|\nabla_x^{k+1}u_x|_h
\right)
\nonumber
\\
&\quad 
+d_1\,R(\nabla_x^{2}V_k,J_uu_x)u_x
+d_2\,\nabla_x\left\{
R(J_uu_x,u_x)\nabla_xV_k
\right\}
\nonumber
\\
&\quad
+d_3\,R(J_u\nabla_xu_x, u_x)\nabla_xV_k
+d_4\,R(\nabla_xu_x,u_x)J_u\nabla_xV_k
\nonumber
\\
&\quad
+(d_5A_1+d_6A_2+d_7A_3)\nabla_xV_k 
\nonumber
\\
&\quad 
+
\mathcal{O}
\left(
\sum_{m=0}^k|\nabla_x^mu_x|_h
+
|\nabla_x^{k-2}u_x|_h
\sum_{m=0}^4|\nabla_x^mu_x|_h
\right),  
\label{eq:yuki}
\end{align}
where the symbol $[\cdot,\cdot]$ denotes the commutator, 
that is,  
$$[a\,J_u\nabla_x^4, \Phi\nabla_x^{-2}]\nabla_x^ku_x
=
a\,J_u\nabla_x^4(\Phi\nabla_x^{-2}\nabla_x^ku_x)
-\Phi\nabla_x^{-2}(a\,J_u\nabla_x^4\nabla_x^ku_x), 
$$
which plays the crucial role in the proof 
and is to be computed below.
We start with 
\begin{align}
[a\,J_u\nabla_x^4, \Phi\nabla_x^{-2}]
&=
-\frac{e_1}{2}
[J_u\nabla_x^4,\Phi_1\nabla_x^{-2}]
+
\frac{e_2}{8}
[J_u\nabla_x^4,\Phi_2\nabla_x^{-2}]. 
\label{eq:Comm}
\end{align}
In what follows, we use $\nabla_x^{-2}$ and $\nabla_x^{-1}$
which does not make sense in general. 
Fortunately, however, 
this makes sense in our computation, 
because they always act on the image of $\nabla_x^2$. 
First, from the product formula and
 $J_u\nabla_x=\nabla_xJ_u$, it follows that 
\begin{align}
[J_u\nabla_x^4,\Phi_1\nabla_x^{-2}]
&=J_u\Phi_1\nabla_x^2
+4J_u(\nabla_x\Phi_1)\nabla_x
+6J_u(\nabla_x^2\Phi_1)
\nonumber
\\
&\quad
+4J_u(\nabla_x^3\Phi_1)\nabla_x^{-1}
+J_u(\nabla_x^4\Phi_1)\nabla_x^{-2}
-\Phi_1J_u\nabla_x^2
\nonumber
\\
&=
-2\Phi_1J_u\nabla_x^2
+(J_u\Phi_1+\Phi_1J_u)\nabla_x^2
+4J_u(\nabla_x\Phi_1)\nabla_x
\nonumber
\\
&\quad 
+6J_u(\nabla_x^2\Phi_1)+4J_u(\nabla_x^3\Phi_1)\nabla_x^{-1}
+J_u(\nabla_x^4\Phi_1)\nabla_x^{-2}.  
\label{eq:comm1}
\end{align}  
Here $(\nabla_x^{\ell}\Phi_1)$, $\ell=1,\ldots,4$, 
are defined by 
$(\nabla_x\Phi_1)Y=\nabla_x(\Phi_1Y)-\Phi_1(\nabla_xY)$, 
and $(\nabla_x^{\ell}\Phi_1)Y
=\nabla_x\left\{(\nabla_x^{\ell-1}\Phi_1)Y\right\}
-(\nabla_x^{\ell-1}\Phi_1)\nabla_xY$, $\ell=2,3,4$, 
for any $Y\in \Gamma(u^{-1}TN)$.  
Recalling the definition of $\Phi_1$ and 
using the properties (i)-(vi), 
\eqref{eq:rr2}, 
\eqref{eq:ppp1}, and \eqref{eq:ppp2}, 
we deduce  
\begin{align}
&-2\Phi_1J_u\nabla_x^2
=
-2R(J_u\nabla_x^2\cdot, u_x)u_x
=
2R(\nabla_x^2\cdot,J_uu_x)u_x, 
\quad
(\because \text{(vi)})
\label{eq:comm11}
\\
&(J_u\Phi_1+\Phi_1J_u)\nabla_x^2
=
J_uR(\nabla_x^2\cdot,u_x)u_x
+
R(J_u\nabla_x^2\cdot,u_x)u_x
\nonumber
\\
&\quad 
=R(\nabla_x^2\cdot,u_x)J_uu_x
-R(\nabla_x^2\cdot, J_uu_x)u_x
\quad 
(\because \text{(iv), (vi)})
\nonumber
\\&\quad 
=
\nabla_x\left\{
R(J_uu_x,u_x)\nabla_x\cdot
\right\}
-2R(J_u\nabla_xu_x,u_x)\nabla_x, 
\quad 
(\because \eqref{eq:rr2})
\label{eq:comm12}
\\
&
4\,J_u(\nabla_x\Phi_1)\nabla_x
=
4\,J_u
\left[
\nabla_x\left\{
R(\nabla_x\cdot,u_x)u_x
\right\}
-R(\nabla_x^2\cdot, u_x)u_x
\right]
\nonumber
\\
&\quad
=
4\left\{
R(\nabla_x\cdot,\nabla_xu_x)J_uu_x
+
R(\nabla_x\cdot,u_x)J_u\nabla_xu_x
\right\}
\quad
(\because \text{(iv)})
\nonumber
\\
&\quad
=
4R(J_u\nabla_xu_x,u_x)\nabla_x
+2A_2\nabla_x
+2A_3\nabla_x.
\quad
(\because \eqref{eq:ppp1}, \eqref{eq:ppp2})
\label{eq:comm13}
\end{align}
Substituting 
\eqref{eq:comm11}-\eqref{eq:comm13} into 
\eqref{eq:comm1}, 
we have 
\begin{align}
&-\frac{e_1}{2}
[J_u\nabla_x^4,\Phi_1\nabla_x^{-2}]
\nonumber
\\
&
=
-e_1\,R(\nabla_x^2\cdot, J_uu_x)u_x
-\frac{e_1}{2}
\nabla_x\left\{
R(J_uu_x,u_x)\nabla_x\cdot
\right\}
-e_1\,R(J_u\nabla_xu_x,u_x)\nabla_x
\nonumber
\\
&\quad
-(e_1\,A_2+e_1\,A_3)\nabla_x
\nonumber
\\
&\quad 
-3e_1\,J_u(\nabla_x^2\Phi_1)
-2e_1\,J_u(\nabla_x^3\Phi_1)\nabla_x^{-1}
-\frac{e_1}{2}J_u(\nabla_x^4\Phi_1)\nabla_x^{-2}.
\label{eq:Comm1}  
\end{align}
Next, we consider the second term of the RHS of  
\eqref{eq:Comm}. 
To begin with, we see  
\begin{align}
[J_u\nabla_x^4,\Phi_2\nabla_x^{-2}]
&=(J_u\Phi_2-\Phi_2J_u)\nabla_x^2
+4J_u(\nabla_x\Phi_2)\nabla_x
+6J_u(\nabla_x^2\Phi_2)
\nonumber
\\
&\quad
+4J_u(\nabla_x^3\Phi_2)\nabla_x^{-1}
+J_u(\nabla_x^4\Phi_2)\nabla_x^{-2}. 
\label{eq:comm2}
\end{align}
In the same way as above, we deduce 
\begin{align}
&(J_u\Phi_2-\Phi_2J_u)\nabla_x^2
=
J_uR(J_uu_x,u_x)J_u\nabla_x^2
-R(J_uu_x,u_x)J_uJ_u\nabla_x^2
\nonumber
\\
&\phantom{(J_u\Phi_2-\Phi_2J_u)\nabla_x^2}
=0,
\quad
(\because \text{(iv)})
\label{eq:comm21}
\\
&4\,J_u(\nabla_x\Phi_2)\nabla_x
=
4J_u
\left\{
\nabla_x(\Phi_2\nabla_x\cdot)
-\Phi_2\nabla_x^2
\right\}
\nonumber
\\
&\phantom{4\,J_u(\nabla_x\Phi_2)\nabla_x}
=
4J_u
\left\{
R(J_u\nabla_xu_x,u_x)J_u\nabla_x
+
R(J_uu_x,\nabla_xu_x)J_u\nabla_x
\right\}
\nonumber
\\
&\phantom{4\,J_u(\nabla_x\Phi_2)\nabla_x}
=
-8\,R(J_u\nabla_xu_x,u_x)\nabla_x.
\quad 
(\because \text{(iv),(vi)})
\label{eq:comm22}
\end{align}
Thus, substituting \eqref{eq:comm21} and \eqref{eq:comm22} 
into \eqref{eq:comm2}, 
we have 
\begin{align}
\frac{e_2}{8}[J_u\nabla_x^4,\Phi_2\nabla_x^{-2}]
&=
-e_2\,R(J_u\nabla_xu_x,u_x)\nabla_x
\nonumber
\\
&\quad 
+\frac{3e_2}{4}J_u(\nabla_x^2\Phi_2)
+\frac{e_2}{2}J_u(\nabla_x^3\Phi_2)\nabla_x^{-1}
+\frac{e_2}{8}J_u(\nabla_x^4\Phi_2)\nabla_x^{-2}. 
\label{eq:Comm2}
\end{align}
Therefore, by collecting \eqref{eq:Comm}, 
\eqref{eq:Comm1}, and \eqref{eq:Comm2}, 
we deduce 
\begin{align}
&[a\,J_u\nabla_x^4, \Phi\nabla_x^{-2}]\nabla_x^ku_x
\nonumber
\\
&=
-e_1\,R(\nabla_x^2\nabla_x^ku_x, J_uu_x)u_x
-\frac{e_1}{2}
\nabla_x\left\{
R(J_uu_x,u_x)\nabla_x\nabla_x^ku_x
\right\}
\nonumber
\\
&\quad
+(-e_1-e_2)\,R(J_u\nabla_xu_x,u_x)\nabla_x\nabla_x^ku_x
-(e_1\,A_2+e_1\,A_3)\nabla_x\nabla_x^ku_x
\nonumber
\\
&\quad 
-3e_1\,J_u(\nabla_x^2\Phi_1)\nabla_x^ku_x
-2e_1\,J_u(\nabla_x^3\Phi_1)\nabla_x^{k-1}u_x
-\frac{e_1}{2}J_u(\nabla_x^4\Phi_1)\nabla_x^{k-2}u_x
\nonumber
\\
&\quad 
+\frac{3e_2}{4}J_u(\nabla_x^2\Phi_2)\nabla_x^ku_x
+\frac{e_2}{2}J_u(\nabla_x^3\Phi_2)\nabla_x^{k-1}u_x
+\frac{e_2}{8}J_u(\nabla_x^4\Phi_2)\nabla_x^{k-2}u_x
\nonumber
\\
&=
-e_1\,R(\nabla_x^2\nabla_x^ku_x, J_uu_x)u_x
-\frac{e_1}{2}
\nabla_x\left\{
R(J_uu_x,u_x)\nabla_x\nabla_x^ku_x
\right\}
\nonumber
\\
&\quad
+(-e_1-e_2)\,R(J_u\nabla_xu_x,u_x)\nabla_x\nabla_x^ku_x
-(e_1\,A_2+e_1\,A_3)\nabla_x\nabla_x^ku_x
\nonumber
\\
&\quad 
+\mathcal{O}\left(\sum_{m=0}^k|\nabla_x^mu_x|_h\right). 
\nonumber
\end{align}
This together with 
$\nabla_x^ku_x=\Lambda+\mathcal{O}(|\nabla_x^{k-2}u_x|_h)$ 
concludes 
\begin{align}
&[a\,J_u\nabla_x^4, \Phi\nabla_x^{-2}]\nabla_x^ku_x
\nonumber
\\
&=
-e_1\,R(\nabla_x^2V_k, J_uu_x)u_x
-\frac{e_1}{2}
\nabla_x\left\{
R(J_uu_x,u_x)\nabla_xV_k
\right\}
\nonumber
\\
&\quad
+(-e_1-e_2)\,R(J_u\nabla_xu_x,u_x)\nabla_xV_k
-(e_1\,A_2+e_1\,A_3)\nabla_xV_k
\nonumber
\\
&\quad 
+\mathcal{O}\left(\sum_{m=0}^k|\nabla_x^mu_x|_h\right). 
\label{eq:Com}
\end{align}
Finally, combining \eqref{eq:yuki} and \eqref{eq:Com}, 
we derive 
\begin{align}
\nabla_tV_k
&=
\left\{
-\ep\nabla_x^4
+a\,J_u\nabla_x^4
+\lambda\,J_u\nabla_x^2
\right\}
V_k
\nonumber
\\
&\quad 
+
\ep\,
\mathcal{O}
\left(
|\nabla_x^{k+2}u_x|_h
+
|\nabla_x^{k+1}u_x|_h
\right)
\nonumber
\\
&\quad 
+(d_1+e_1)\,R(\nabla_x^{2}V_k,J_uu_x)u_x
+\left(d_2+\frac{e_1}{2}\right)\,\nabla_x\left\{
R(J_uu_x,u_x)\nabla_xV_k
\right\}
\nonumber
\\
&\quad
+(d_3+e_1+e_2)\,R(J_u\nabla_xu_x, u_x)\nabla_xV_k
+d_4\,R(\nabla_xu_x,u_x)J_u\nabla_xV_k
\nonumber
\\
&\quad
+\left\{d_5A_1+(d_6+e_1)A_2+(d_7+e_1)A_3\right\}\nabla_xV_k 
\nonumber
\\
&\quad 
+
\mathcal{O}
\left(
\sum_{m=0}^k|\nabla_x^mu_x|_h
+
|\nabla_x^{k-2}u_x|_h
\sum_{m=0}^4|\nabla_x^mu_x|_h
\right).
\label{eq:remi}
\end{align}
\par  
We go back to the estimate for \eqref{eq:eqV_k}. 
Using \eqref{eq:remi}, 
we have 
\begin{align}
&
\frac{1}{2}\frac{d}{dt}
\|V_k\|_{L^2}^2
=
\int_{\TT}
h(\nabla_tV_k, V_k)dx
\nonumber
\\
&=
-\ep
\int_{\TT}
h(\nabla_x^4V_k, V_k)dx
+
\ep\,
\int_{\TT}
h(
\mathcal{O}
\left(
|\nabla_x^{k+2}u_x|_h
+
|\nabla_x^{k+1}u_x|_h
\right), 
V_k)dx
\nonumber
\\
&\quad 
+
\int_{\TT}
h(\left\{a\,J_u\nabla_x^4+\lambda\,J_u\nabla_x^2\right\}V_k, 
V_k)dx
\nonumber
\\
&\quad 
+(d_1+e_1)\,
\int_{\TT}
h(R(\nabla_x^{2}V_k,J_uu_x)u_x, V_k)dx
\nonumber
\\
&\quad
+\left(d_2+\frac{e_1}{2}\right)\,
\int_{\TT}
h(\nabla_x
\left\{
R(J_uu_x,u_x)\nabla_xV_k
\right\}, V_k)dx
\nonumber
\\
&\quad 
+(d_3+e_1+e_2)\,
\int_{\TT}
h(
R(J_u\nabla_xu_x,u_x)\nabla_xV_k, V_k)dx
\nonumber
\\
&\quad
+d_4
\,\int_{\TT}
h(
R(\nabla_xu_x,u_x)J_u\nabla_xV_k, V_k)dx
\nonumber
\\
&\quad 
+
\int_{\TT}
h(\left\{d_5A_1+(d_6+e_1)A_2+(d_7+e_1)A_3\right\}\nabla_xV_k, V_k)dx
\nonumber
\\
&\quad
+\int_{\TT}
h\left(\mathcal{O}
\left(
\sum_{m=0}^k|\nabla_x^mu_x|_h
+
|\nabla_x^{k-2}u_x|_h
\sum_{m=0}^4|\nabla_x^mu_x|_h
\right), V_k
\right)dx. 
\nonumber
\end{align}
We compute each term of the above separately. 
By integrating by parts, we obtain 
\begin{align}
\int_{\TT}
h(J_u\nabla_x^4V_k, V_k)dx
&=
\int_{\TT}
h(J_u\nabla_x^2V_k, \nabla_x^2V_k)dx
=0,   
\nonumber
\\
\int_{\TT}
h(J_u\nabla_x^2V_k, V_k)dx
&=
-
\int_{\TT}
h(J_u\nabla_xV_k, \nabla_xV_k)dx
=0.  
\nonumber
\end{align}
In the same way, we integrate by part using (i) and (ii) 
to show
\begin{align}
&
\int_{\TT}
h(\nabla_x
\left\{
R(J_uu_x,u_x)\nabla_xV_k
\right\}, V_k)dx
\nonumber
\\
&=
-
\int_{\TT}
h(
R(J_uu_x,u_x)\nabla_xV_k, 
\nabla_xV_k)dx
\nonumber
\\
&=
-
\int_{\TT}
h(
R(\nabla_xV_k,\nabla_xV_k)J_uu_x,u_x)dx
\quad
(\because (\text{ii}))
\nonumber
\\
&=0. 
\quad
(\because (\text{i})) 
\nonumber
\end{align}
Since  $k\geqslant 4$, we use the Sobolev embedding, 
the Cauchy-Schwartz inequality, 
and \eqref{eq:timecut} to deduce 
\begin{align}
&\int_{\TT}
h\left(\mathcal{O}
\left(
\sum_{m=0}^k|\nabla_x^mu_x|_h
+
|\nabla_x^{k-2}u_x|_h
\sum_{m=0}^4|\nabla_x^mu_x|_h
\right), V_k
\right)dx
\nonumber
\\
&
\leqslant 
C\|u_x\|_{H^k}\|V_k\|_{L^2}
\nonumber
\\
&
\leqslant 
C\,(N_k(u))^2. 
\nonumber
\end{align}
Using the integration by parts, the Young inequality 
$AB\leqslant A^2/2+B^2/2$ for any $A,B\geqslant 0$, 
$\nabla_x^ku_x=V_k+\mathcal{O}(|\nabla_x^{k-2}u_x|_h)$, 
$\ep\leqslant 1$, 
and \eqref{eq:timecut}, 
we deduce 
\begin{align}
&-\ep
\int_{\TT}
h(\nabla_x^4V_k, V_k)dx
+
\ep\,
\int_{\TT}
h(
\mathcal{O}
\left(
|\nabla_x^{k+2}u_x|_h
+
|\nabla_x^{k+1}u_x|_h
\right), 
V_k)dx
\nonumber
\\
&=
-\ep
\int_{\TT}
h(\nabla_x^2V_k, \nabla_x^2V_k)dx
\nonumber
\\
&\qquad
+
\ep\,
\int_{\TT}
h(
\mathcal{O}
\left(
|\nabla_x^{k+2}u_x|_h
+
|\nabla_x^{k+1}u_x|_h
\right), 
\nabla_x^ku_x
+
\mathcal{O}
\left(
|\nabla_x^{k-2}u_x|_h
\right)
)dx
\nonumber
\\
&\leqslant
-\ep
\|\nabla_x^2V_k\|_{L^2}^2
+
\ep\,C
\|\nabla_x^{k+2}u_x\|_{L^2}
(\|\nabla_x^ku_x\|_{L^2}
+
\|\nabla_x^{k-1}u_x\|_{L^2})
+
C\,\|u_x\|_{H^k}^2
\nonumber
\\
&\leqslant
-\ep
\|\nabla_x^2V_k\|_{L^2}^2
+
\frac{\ep}{2}
\|\nabla_x^{k+2}u_x\|_{L^2}^2
+
\frac{\ep\,C^2}{2}
(\|\nabla_x^ku_x\|_{L^2}
+
\|\nabla_x^{k-1}u_x\|_{L^2})^2
+
C\,\|u_x\|_{H^k}^2
\nonumber
\\
&\leqslant
-\ep
\|\nabla_x^2V_k\|_{L^2}^2
+
\frac{\ep}{2}
\|\nabla_x^2V_k\|_{L^2}^2
+
C\,\|u_x\|_{H^k}^2
\nonumber
\\
&\leqslant 
-
\frac{\ep}{2}
\|\nabla_x^2V_k\|_{L^2}^2
+
C\,(N_k(u))^2.
\nonumber
\end{align}
Note that $R(\nabla_xu_x,u_x)J_u$ is symmetric. 
Indeed, 
\begin{align}
h(R(\nabla_xu_x,u_x)J_uY,Z)
&=
h(R(J_uY,Z)\nabla_xu_x,u_x)
\quad 
(\because (\text{ii}))
\nonumber
\\
&=
h(R(J_uZ,Y)\nabla_xu_x,u_x)
\quad 
(\because (\text{vi}))
\nonumber
\\
&=
h(R(\nabla_xu_x,u_x)J_uZ,Y
\quad 
(\because (\text{ii}))
\nonumber
\\
&=h(Y,R(\nabla_xu_x,u_x)J_uZ) 
\nonumber
\end{align} 
for any $Y,Z\in\Gamma(u^{-1}TN)$. 
Therefore, the integration by parts implies 
\begin{align}
&
\,\int_{\TT}
h(
R(\nabla_xu_x,u_x)J_u\nabla_x^{k+1}u_x, 
\nabla_x^ku_x)dx
\nonumber
\\
&=
-\frac{1}{2}\,\int_{\TT}
h(
R(\nabla_x^2u_x,u_x)J_u\nabla_x^{k}u_x, 
\nabla_x^ku_x)dx
\nonumber
\\
&\quad 
-\frac{1}{2}\,\int_{\TT}
h(
R(\nabla_xu_x,\nabla_xu_x)J_u\nabla_x^{k}u_x, 
\nabla_x^ku_x)dx
\nonumber
\\
&\leqslant 
C\|u_x\|_{H^k}^2
\nonumber
\\
&\leqslant 
C\,(N_k(u))^2.
\nonumber 
\end{align}
As we observed \eqref{eq:symmm}, 
$A_i$, $i=1,2,3$, are symmetric. 
Thus, in the same way as above, 
the integration by parts shows
\begin{align}
&\int_{\TT}
h(A_i\nabla_xV_k, V_k)dx
=
-\frac{1}{2}
\int_{\TT}
h((\nabla_xA_i)V_k, V_k)dx
\leqslant 
C\,(N_k(u))^2 
\nonumber
\end{align}
for each $i=1,2,3$. 
Hence, we have 
$$
\int_{\TT}
h(\left\{d_5A_1+(d_6+e_1)A_2+(d_7+e_1)A_3\right\}\nabla_xV_k, V_k)dx
\leqslant 
C(N_k(u))^2.
$$
Collecting them, 
we derive 
\begin{align}
\frac{1}{2}
\frac{d}{dt}
\|V_k\|_{L^2}^2
&\leqslant 
-\frac{\ep}{2}\|\nabla_x^2V_k\|_{L^2}^2 
+(d_1+e_1)\,
\int_{\TT}
h(
R(\nabla_x^{2}V_k, J_uu_x)u_x, V_k)dx
\nonumber
\\
&\quad
+(d_3+e_1+e_2)
\,\int_{\TT}
h(
R(J_u\nabla_xu_x,u_x)\nabla_xV_k, V_k)dx
+C\,(N_k(u))^2. 
\label{eq:V1}
\end{align}
To cancel the second and the third term of the RHS of above, 
we set $e_1$ and $e_2$ so that 
\begin{align}
e_1&=-d_1=a-b, 
\nonumber
\\
e_2&=-d_3-e_1
=
\left(-k-\frac{1}{2}\right)a
+\left(-k+\frac{5}{2}\right)b
+\left(-2k-1\right)c.
\nonumber
\end{align}
Therefore,   
we derive   
\begin{align}
\frac{1}{2}
\frac{d}{dt}
\|V_k^{\ep}\|_{L^2}^2
&\leqslant 
-\frac{\ep}{2}\|\nabla_x^2V_k^{\ep}\|_{L^2}^2
+C(N_k(u^{\ep}))^2.
\label{eq:VVe}
\end{align}
\par 
Concerning  
the uniform estimate for 
$\left\{N_k(u^{\ep})\right\}_{\ep\in (0,1]}$, 
it remains to consider 
the energy estimate for $\|u_x^{\ep}\|_{H^{k-1}}^2$. 
However, 
by using the integration by parts, the Sobolev embedding, 
and the Cauchy-Schwartz inequality repeatedly, 
it is now easy to show     
\begin{align}
\frac{1}{2}
\frac{d}{dt}
\|u_x^{\ep}\|_{H^{k-1}}^2
&\leqslant
-\frac{\ep}{2}\sum_{m=0}^{k-1}
\|\nabla_x^{m+2}u_x^{\ep}\|_{L^2}^2
+ 
C\,(N_k(u^{\ep}))^2.   
\label{eq:k-1}
\end{align} 
Therefore, from \eqref{eq:VVe} and \eqref{eq:k-1}, 
we conclude that 
there exits a positive constant $C$
depending on $a,b,c,k,\lambda, \|u_{0x}\|_{H^4}$ 
and not on $\ep$ such that 
\begin{align}
\frac{d}{dt}(N_k(u^{\ep}))^2
&=
\frac{d}{dt}\left(
\|u_x^{\ep}\|_{H^{k-1}}^2
+
\|V_k^{\ep}\|_{L^2}^2
\right)
\leqslant 
C(N_k(u^{\ep}))^2
\nonumber
\end{align}
on the time-interval $[0,T_{\ep}^{\star}]$. 
This implies 
$(N_k(u^{\ep}(t)))^2
\leqslant 
(N_k(u_0))^2
e^{Ct}
$
for any $t\in [0,T_{\ep}^{\star}]$. 
Thus, by the definition of $T_{\ep}^{\star}$, 
there holds  
\begin{align}
4(N_4(u_0))^2
&=
(N_4(u^{\ep}(T_{\ep}^{\star})))^2
\leqslant 
(N_4(u_0))^2
e^{C_4T_{\ep}^{\star}} 
\nonumber
\end{align}
with $C_4>0$ which depends on 
 $a,b,c,\lambda, \|u_{0x}\|_{H^4}$ 
and not on $\ep$. 
This shows $e^{C_4T_{\ep}^{\star}}\geqslant 4$ 
and hence 
$T_{\ep}^{\star}\geqslant (\log 4) /C_4$ holds. 
Therefore, if we set $T=(\log 4)/C_4$, 
it follows that 
$T_{\ep}^{\star}\geqslant T$ for any $\ep\in (0,1]$ 
and 
$\left\{
N_k(u^{\ep})
\right\}_{\ep\in (0,1]}$   
is bounded in $L^{\infty}(0,T)$. 
\par
As stated before, this shows that $\left\{u_x\right\}_{\ep\in(0,1]}$ 
is bounded in $L^{\infty}(0,T;H^k(\TT;TN))$. 
Hence the standard compactness argument  
shows the existence of a map $u\in C([0,T]\times \TT;N)$ 
and a subsequence $\left\{u^{\ep(j)}\right\}_{j=1}^{\infty}$ of 
$\left\{u^{\ep}\right\}_{\ep\in (0,1]}$
that satisfy 
\begin{alignat}{3}
&u_x^{\ep(j)}\to u_x
\quad
&\text{in}
\quad 
&C([0,T];H^{k-1}(\TT;TN)), 
\nonumber
\\
&u_x^{\ep(j)}\to u_x
\quad
&\text{in}
\quad 
&L^{\infty}(0,T;H^{k}(\TT;TN)) 
\quad
\text{weakly star}
\nonumber
\end{alignat}
as $j\to \infty$, 
and this $u$ is smooth and solves \eqref{eq:pde}-\eqref{eq:data}. 
\par 
Finally, in the general case where 
$u_0\in C(\TT;N)$ and $u_{0x}\in H^k(\TT;TN)$, 
it suffices to modify the above argument slightly 
by taking a sequence 
$\left\{u_{0}^i\right\}_{i=1}^{\infty}\subset C^{\infty}(\TT;N)$ 
such that  
\begin{equation}
u_{0x}^i
\to
u_{0x}
\quad 
\text{in}
\quad 
H^{k}(\TT;TN)
\label{eq:dense}
\end{equation}
as $i\to \infty$. 
We omit the detail, because the argument of this part 
is the same as that in \cite{onodera4}. 
\end{proof}
{\bf Acknowledgements.} \\
The author would like to thank Hiroyuki Chihara 
for helpful comments in \cite{chihara2}. 
This work has been supported by 
JSPS Grant-in-Aid for Scientific Research (C) 
Grant Number 16K05235.

%

\end{document}